\newtheorem{theorem}{Theorem}[section]
\newtheorem{lemma}[theorem]{Lemma}
\theoremstyle{definition}
\newtheorem{definition}[theorem]{Definition}
\theoremstyle{remark}
\newtheorem{rmk}[theorem]{Remark}
\numberwithin{equation}{section}
\newcommand{\R}[1]{\mathbb{R}^#1}
\begin{document}

\title[Hypersingular Integrals along radial hypersurfaces]{On the Boundedness of Hypersingular Integrals Along Certain Radial Hypersurfaces}


\author[Sajin A. W.]{Sajin Vincent A.W.}
\address{Indian Institute of Technology Indore, 
Khandwa Road, Simrol, 
Indore 453552, 
INDIA}
\curraddr{}
\email{sajinvincent2@gmail.com}
\thanks{}

\author[A. Deshmukh]{Aniruddha Deshmukh}
\address{Indian Institute of Technology Indore, 
Khandwa Road, Simrol, 
Indore 453552, 
INDIA}
\curraddr{}
\email{aniruddha480@gmail.com }
\thanks{}

\author[V. K. Sohani]{Vijay Kumar Sohani}
\address{Indian Institute of Technology Indore, 
Khandwa Road, Simrol, 
Indore 453552, 
INDIA}
\curraddr{}
\email{vsohani@iiti.ac.in}
\thanks{}

\subjclass[2010]{42B20, 47A30, 42B15, 47B38}

\date{}

\dedicatory{}

\commby{Sajin Vincent A.W.}

\begin{abstract}
    We study a class of oscillatory hypersingular integral operators associated to a radial hypersurface of the form $\Gamma(t)=(t,\varphi(t)), t\in\R{n}$. When $\varphi$ satisfies suitable curvature and monotonicity conditions, we prove $L^p(\R{{n+1}})$ boundedness of the operator, where the range of $p$ depends on the hypersingularity of the operator. We also establish certain Sobolev estimates of the operator under consideration. 
\end{abstract}
\maketitle
\section{Introduction}\label{sec1}

Singular integral operators have played a fundamental role in harmonic analysis, with applications extending to partial differential equations and geometric measure theory (see, for instance \cite{MR1113517},\cite{MR1331981},\cite{MR290095} and the references therein). A classical example is the Hilbert transform in one dimension given by
    $$Hf(x):=\frac{1}{\pi}\text{ p.v.}\int_{-\infty}^\infty \frac{f(x-y)}{y}dy.$$
    Analogously, for an appropriate curve $\gamma:\mathbb{R}\to \R{n}$, the Hilbert transform of $f$ along $\gamma$ is defined as 
    $$Tf(x):=\lim_{\substack{\epsilon\to 0\\N\to \infty}}\int_{\epsilon<|t|<N}f(x-\gamma(t))\frac{dt}{t},\quad x\in\R{n}.$$ 
    Fabes and Riviére (\cite{MR209787}) were motivated to study the Hilbert transform along $\gamma$,
    in their effort to extend Calderón and Zygmund's method of rotation. In \cite{MR450900}, Nagel, Riviere and Wainger proved that $T$ is bounded on $L^p(\R{n})$ for $1<p<\infty$, when $\gamma(t)=(|t|^{\alpha_1}\text{sgn}\, t,\ldots, |t|^{\alpha_n}\text{sgn}\,t),\,t\in\mathbb{R}.$ 

    These singular integral operators have, ``the order of the singularity" at $0\in\R{n}$ as same as the dimension of the universe. A hypersingular integral is an integral with a strong singularity whose order is higher than the dimension of the set of integration. In various problems related to function theory and integral equations, hypersingular integrals have emerged as a powerful analytical tool. For an exposition of this topic we refer the readers to \cite{MR1918790}. One of the applications was given by E. Stein in \cite{bams/1183523864} who characterized the space of Bessel potentials of order $0<\alpha <2$ in terms of convergence of hypersingular integrals.
    
    As an example, let $\Gamma(t)=(t,|t|^k)$ or $(t,|t|^k\text{sgn}\, t),\, t\in\mathbb{R}$ and $ k\geq 2$. Then the hypersingular integral operator
    $$\mathcal{T}_\alpha f(x):=\lim_{\epsilon\to 0}\int_{\epsilon<|t|<1}f(x-\Gamma(t))\frac{dt}{t|t|^\alpha},\quad \alpha>0,$$ is not bounded on $L^2(\R{2})$ due to the worsened singularity at the origin. In order to balance this, one introduces an oscillatory term $e^{-2\pi i|t|^{-\beta}},\, t\in\mathbb{R}$ in the kernel of $\mathcal{T}_\alpha$ and considers the operator 
    $$\mathcal{T}_{\alpha,\beta}f(x):=\lim_{\epsilon\to 0}\int_{\epsilon<|t|<1}f(x-\Gamma(t))e^{-2\pi i|t|^{-\beta}}\frac{dt}{t|t|^\alpha}, \quad\text{with }\alpha,\beta>0.$$
    The problem of $L^p$-boundedness of the operator $\mathcal{T}_{\alpha,\beta}$ was taken up by Chandarana in \cite{MR1432837}. In this paper, Chandarana showed that 
    \begin{itemize}
        \item $\left\|\mathcal{T}_{\alpha,\beta} f\right\|_{L^2(\R{2})}\leq C\|f\|_{L^2(\R{2})}$ if and only if $\beta\geq 3\alpha$, and 
        \item $\left\|\mathcal{T}_{\alpha,\beta}f\right\|_{L^p(\R{2})}\leq C\|f\|_{L^p(\R{2})}$ when $\beta>3\alpha$ and 
        $$1+\frac{3\alpha(\beta+1)}{\beta(\beta+1)+(\beta-3\alpha)}<p<\frac{\beta(\beta+1)+(\beta-3\alpha)}{3\alpha(\beta+1)}+1.$$
    \end{itemize}
    
    Chandarana's result was improved and extended to higher dimensions by Chen \textit{et al.} in \cite{MR2407077}, where the authors considered the curve $\Gamma(t)=(\theta_1|t|^{p_1},\theta_2|t|^{p_2},\ldots,\theta_n|t|^{p_n})$ or 
    $\text{sgn}(t)(\theta_1|t|^{p_1},\theta_2|t|^{p_2},\ldots,\theta_n|t|^{p_n})$ for $t\in\mathbb{R}, \,\theta=(\theta_1,\theta_2,\ldots,\theta_n)\in\R{n}$ and $ p_i>0$. They proved the following:
    \begin{itemize}
        \item $\left\|\mathcal{T}_{\alpha,\beta}f\right\|_{L^p(\R{n})}\leq C\|f\|_{L^p(\R{n})}$ whenever $\beta>(n+1)\alpha$ and $\frac{2\beta}{2\beta-(n+1)\alpha}<p<\frac{2\beta}{(n+1)\alpha}$, and
        \item $\left\|\mathcal{T}_{\alpha,\beta}f\right\|_{L^2(\R{n})}\leq C\|f\|_{L^2(\R{n})}$ when $\beta=(n+1)\alpha$.
    \end{itemize}
    
    In \cite{MR4078200}, Lee \textit{et al.} consider a hypersingular operator along a hypersurface. In particular they consider the surface $\Gamma(t)=(t,|t|^k)$, where $t=(t_1,\ldots,t_n)$ and $k\geq 1$, and define the operator
    $$\mathcal{R}f(x):=\int_{\R{n}}f(x-\Gamma(t))\frac{e^{-2\pi i|t|^{-\beta}}\Omega(t)}{|t|^{\alpha+n}}dt,$$
    where the kernel $\Omega(t)/|t|^n$ is homogeneous of degree $-n$, infinitely differentiable except possibly at the origin and $\int_{|t|=1}\Omega(t)\,\mathrm{d}\sigma(t)=0.$ Then, for $n\geq 2$, the authors of \cite{MR4078200} proved that 
    \begin{itemize}
        \item $\mathcal{R}$ is bounded on $L^2(\R{{n+1}})$ if $\beta>2\alpha>0$, and 
        \item $\mathcal{R}$ is bounded on $L^p(\R{{n+1}})$ for $\alpha/\beta<1/p<(\beta-\alpha)/\beta$ and $\beta>2\alpha.$
    \end{itemize}
    
    Motivated from the above discussion, we aim to extend Lee's result to more general surfaces of the form $\Gamma(t)=(t,\varphi(t))$, where $\varphi$ is a radial $C^3$ function on $\R{n}$. Indeed, we assume certain growth conditions on the function $\varphi$. Throughout the article, we assume that $\varphi:(0,\infty)\to \mathbb{R}$ satisfies the following: 
    \begin{enumerate}
        \item $\varphi(0)=0$ and $\varphi^\prime (r)\cdot \varphi^{\prime \prime}(r)>0, \forall r\in (0,\infty)$,
        \item There exist $k_2 \geq k_1>0$ such that for all $r\in (0,\infty)$, 
        \begin{equation}k_1\leq \frac{r\varphi^{\prime \prime}(r)}{\varphi^\prime(r)}\leq k_2,\end{equation}
        and
        \item There exist $k_3>0$ such that for all $r\in (0,\infty)$
        \begin{equation}\label{k_3}\left|\frac{r\varphi^{\prime \prime \prime}(r)}{\varphi^{\prime \prime}(r)}\right|\leq k_3.\end{equation}
    \end{enumerate}
    \begin{rmk}
        The requirement of $\varphi(0)=0$ is not necessary since we deal with norm estimates. However, this assumption is convenient and useful in our calculations. 
    \end{rmk}
    \begin{rmk}\label{1.2}
        Apart from the monomials considered by Lee \textit{et al.} in \cite{MR4078200} one non-trivial example for $\varphi$ is $\varphi(r)=r^2 e^{-r}\sinh(r).$ Other such examples would be $\varphi(r)=r^{\gamma_1}(1-e^{-r})^{\gamma_2},$ for $\gamma_1>1$ and $\gamma_2\geq 0$; $\varphi(r)= \sum_{j=1}^l a_jr^{1 + \gamma_j},\, a_j>0,\, \gamma_j> 0$ for $j=1,\ldots, l.$
    \end{rmk} 
    \begin{rmk}\label{1.3}
        We observe some properties about the surface $\Gamma(t)$. Initially, the scalar curvature of $\Gamma(t)$ is negative (particularly for $|x|<k_1$). But it becomes positive for $|x|>k_3.$ For large values of $|x|$, the scalar curvature, $S$ is bounded between 
    $$A|x|^{2(k_1-1)}\leq S(x)\leq B|x|^{2(k_2-1)}.$$
    \end{rmk}
    Our main results are the following theorems. 
    \begin{theorem}\label{main}
		Let $\varphi$ be a radial $C^3$ function on $\R{n}$ as described above.
	  Let $\Gamma(t)=(t,\varphi(t)),$ for $ t\in\R{n}$ and consider the operator 
		\begin{equation}\label{R}\mathcal{R}f(x):=\int_{\R{n}}f(x-\Gamma(t))\frac{e^{-2\pi i|t|^{-\beta}}\Omega(t)}{|t|^{\alpha+n}}\,\mathrm{d}t,\end{equation}
		where the kernel $\frac{\Omega(t)}{|t|^n}$ satisfies the following conditions:
		\begin{enumerate}[label=(\alph*)]
			\item It is homogeneous of degree $-n$.
			\item $\left. \Omega\right|_{\mathbb{S}^{n-1}}$ is (uniformly) bounded.
			\item $\int_{|t|=1}\Omega(t)\,\mathrm{d}\sigma(t)=0$.
		\end{enumerate}
		Then, for $n\geq 2$, we have the following:
        \begin{enumerate}
            \item $\mathcal{R}$ is bounded on $L^2(\R{{n+1}})$ if $\beta>2\alpha >0$. 
            \item $\mathcal{R}$ is bounded on $L^p(\R{{n+1}})$ for $\frac{\beta}{\beta-\alpha}<p<\frac{\beta}{\alpha},$ when $\beta>2\alpha.$
        \end{enumerate}
	\end{theorem}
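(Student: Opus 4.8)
The plan is to follow the classical Fourier-analytic approach for oscillatory (hyper)singular integrals: dyadically decompose the kernel in $|t|$, estimate the Fourier multiplier of each piece via van der Corput / stationary phase, and sum the geometric-type series. Writing $\mathcal{R} = \sum_{j \ge 0} \mathcal{R}_j$, where $\mathcal{R}_j$ localizes $|t| \sim 2^{-j}$, each $\mathcal{R}_j$ is convolution with a measure whose Fourier transform is
\begin{equation*}
\widehat{\mathcal{R}_j}(\xi,\tau) = \int_{|t|\sim 2^{-j}} e^{-2\pi i(t\cdot \xi + \varphi(|t|)\tau + |t|^{-\beta})}\,\frac{\Omega(t)}{|t|^{\alpha+n}}\,\mathrm{d}t,
\end{equation*}
for $(\xi,\tau) \in \R{n}\times\mathbb{R}$. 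On the piece $|t|\sim 2^{-j}$ one has $|t|^{-\beta}\sim 2^{j\beta}$, so the oscillation from the hypersingular factor is of size $2^{j\beta}$, while the amplitude is of size $2^{j\alpha}$ (times $2^{jn}$ from the measure normalization, balanced by the $\mathrm{d}t$). The phase $|t|^{-\beta}$ has radial second derivative of size $2^{j(\beta+2)}$, so a stationary-phase bound in the radial variable gains a factor $2^{-j(\beta+2)/2}$; together with the zero-mean cancellation of $\Omega$ on spheres (which handles the region where $\xi,\tau$ are small and the non-radial phase contributes nothing), this should give
\begin{equation*}
\|\mathcal{R}_j\|_{L^2\to L^2} \lesssim \min\!\bigl(2^{-j(\beta/2-\alpha)},\, 2^{j\alpha}\,(2^{-j}(|\xi|+|\tau|))^{\epsilon}\bigr)
\end{equation*}
in spirit, i.e.\ decay in $j$ when $\beta>2\alpha$ from the first bound and decay in $j$ when the frequency is small from the second. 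Summing in $j$ gives part (1). I would model the two regimes (high vs.\ low frequency relative to $2^j$) on the treatment in \cite{MR4078200} and \cite{MR1432837}, the only new ingredient being that $\varphi$ is no longer a monomial.

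For part (2), the $L^p$ range, the standard device is interpolation between the $L^2$ bound and a trivial or near-trivial bound on the individual pieces. One shows $\|\mathcal{R}_j\|_{L^1\to L^1}$ and $\|\mathcal{R}_j\|_{L^\infty\to L^\infty}$ are controlled by $2^{j\alpha}$ (just from the size of the amplitude, using $|\Omega|$ bounded and the $L^1$ norm of the measure on $|t|\sim 2^{-j}$, which is $\sim 2^{j\alpha}$), interpolate with the $L^2$ gain $2^{-j(\beta/2-\alpha)}$ to get $\|\mathcal{R}_j\|_{L^p\to L^p}\lesssim 2^{-j\delta(p)}$ with $\delta(p)>0$ precisely when $|1/p - 1/2| < (\beta/2-\alpha)/\beta \cdot (\text{something})$, which after arithmetic becomes the symmetric range $\beta/(\beta-\alpha) < p < \beta/\alpha$. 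Summing the geometric series in $j$ finishes it. The endpoint and the precise bookkeeping of how the $2^{jn}$ Jacobian factors cancel need care but are routine.

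The main obstacle — and where the hypotheses (1)–(3) on $\varphi$ genuinely enter — is the stationary-phase/van der Corput analysis of the full phase $\Phi(t) = t\cdot\xi + \varphi(|t|)\tau + |t|^{-\beta}$ on the annulus $|t|\sim 2^{-j}$. In the monomial case $\varphi(r)=r^k$ one has exact scaling in $j$, which trivializes the dependence on $j$; here one must instead extract uniform lower bounds on derivatives of $\varphi$ from the two-sided bound $k_1 \le r\varphi''(r)/\varphi'(r)\le k_2$ and the bound \eqref{k_3} on $r\varphi'''/\varphi''$. Concretely, condition (2) forces $\varphi'(r)\sim r^{k_1-1}$ to $r^{k_2-1}$ and $\varphi''(r)\sim r^{k_1-2}$ to $r^{k_2-2}$ up to constants on dyadic scales (by integrating the logarithmic derivative), so that on $|t|\sim 2^{-j}$ the curvature term $\varphi''(|t|)\tau$ behaves like a monomial of controlled degree; condition (3) controls the third derivative so that the second-derivative test (with error terms) is legitimate. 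The convexity/monotonicity in (1) ensures $\varphi'$ and $\varphi''$ do not vanish or change sign, so there is a single, non-degenerate radial stationary point to analyze. Once these scale-invariant surrogates for $\varphi$, $\varphi'$, $\varphi''$ are in hand, the estimates for $\mathcal{R}_j$ reduce — scale by scale — to essentially the monomial computations already carried out in the literature, with constants depending only on $k_1,k_2,k_3,\alpha,\beta,n$.
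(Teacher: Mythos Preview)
Your overall strategy---dyadic decomposition in $|t|$, multiplier estimates for each piece via oscillatory-integral bounds, summing a geometric series, then interpolating the $L^2$ bound against the trivial $L^1$ bound $\|\mathcal{R}_j\|_{L^1\to L^1}\lesssim 2^{j\alpha}$ and dualizing---is exactly what the paper does, and the arithmetic for the $L^p$ range is correct.

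The gap is in the multiplier estimate itself. You write that condition (1) on $\varphi$ guarantees ``a single, non-degenerate radial stationary point,'' but this is not so. After rescaling to $r\in[1/2,2]$ the phase is $g(r,\theta)=2^{-l}|\xi'|r\cos\theta+\xi_{n+1}\varphi(2^{-l}r)+2^{\beta l}r^{-\beta}$, and
\[
g_{rr}''(r,\theta)=\beta(\beta+1)2^{\beta l}r^{-\beta-2}+2^{-2l}\xi_{n+1}\,\varphi''(2^{-l}r)
\]
can vanish whenever $\xi_{n+1}\varphi''<0$ has size comparable to the first term; the non-vanishing of $\varphi''$ alone does not prevent this cancellation, so a bare radial second-derivative van der Corput bound is unavailable. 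The paper's resolution is genuinely two-variable: it singles out the polar angle $\theta$ between $t$ and $\xi'$ and proves (Lemma \ref{lambda}) that at every $(r,\theta)\in[1/2,2]\times[0,\pi]$ at least one of $|g_r'|,\ |g_\theta'|,\ |g_{rr}''|,\ |g_{\theta\theta}''|$ is bounded below by $\epsilon\lambda$, with $\lambda\geq 2^{\beta l}$. The proof is a contradiction argument that combines these four derivatives algebraically and uses the hypotheses $k_1\le r\varphi''/\varphi'\le k_2$ and $|r\varphi'''/\varphi''|\le k_3$ in an essential way---not merely as ``monomial surrogates.'' One then covers $[1/2,2]\times[0,\pi]$ by finitely many rectangles on each of which a specific derivative stays uniformly large (Lemma \ref{lemma2}) and applies integration by parts or one-variable van der Corput in that direction. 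Your proposal never introduces the angular variable, and this two-variable lower-bound lemma is the technical heart of the argument you are missing.

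A smaller point: the mean-zero condition (c) on $\Omega$ is not used anywhere in the paper's proof, and your appeal to it for the low-frequency regime is unnecessary. For $l<0$ the trivial size bound $|m_l|\lesssim 2^{\alpha l}$ already sums since $\alpha>0$, and for $l\geq 0$ the term $2^{\beta l}r^{-\beta}$ supplies oscillation regardless of how small $|\xi|$ is.
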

    Moving a step ahead, we also prove certain Sobolev estimates for the operator $\mathcal{R}.$
    \begin{theorem}
        \label{L2SobolevEstimate} Let $\mathcal{R}$ be as in Theorem \ref{main}. Then,
        \begin{enumerate}
            \item For $s\leq \frac{(\beta/2)-\alpha}{\beta+k_3+2},$ the operator $\mathcal{R}$ is bounded from $L^2(\R{{n+1}})$ to $L_s^2(\R{{n+1}}).$
            \item If $0 \leq s \leq \frac{\beta/2 - \alpha}{\beta + k_3 + 2} =: s_0$ and 
        $$\left( 1 - \frac{s}{s_0} \right) \frac{\alpha}{\beta} + \frac{s}{2s_0} < \frac{1}{p} < \left( 1 - \frac{s}{s_0} \right) \left( 1 - \frac{\alpha}{\beta} \right) + \frac{s}{2s_0}.$$ Then, the operator $\mathcal{R}$ is bounded from $L^p \left( \mathbb{R}^{n + 1} \right)$ to $L^p_s \left( \mathbb{R}^{n + 1} \right)$.
        \end{enumerate}
     \end{theorem}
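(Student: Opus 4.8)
The plan is to obtain the Sobolev estimates by quantifying the decay already present in the $L^2$ and $L^p$ boundedness arguments behind Theorem \ref{main}. Recall that proving Theorem \ref{main} reduces, after a Littlewood--Paley style dyadic decomposition $t\sim 2^{-j}$ of the kernel, to estimating the operators $\mathcal{R}_j$ with multipliers $\widehat{m_j}(\xi,\tau)$; the oscillatory factor $e^{-2\pi i|t|^{-\beta}}$ together with the van der Corput / stationary phase analysis on the radial profile $\varphi$ yields, on the scale $2^{-j}$, a gain of the form $2^{-j(\beta/2-\alpha)}$ relative to the trivial $L^1$ bound $2^{j\alpha}$, while the $\ell^1$ summation of the $\mathcal{R}_j$ in $L^2$ is what forces $\beta>2\alpha$. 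To upgrade to an $L^2\to L^2_s$ estimate I would insert the Fourier weight $(1+|(\xi,\tau)|)^{s}$ and track how it interacts with each dyadic piece: on the block where the relevant frequency is of size $\lesssim 2^{j(\beta+ k_3+2)}$ (this is the range in which the phase $|t|^{-\beta}$, with $t\sim 2^{-j}$, dominates the contribution of $\xi\cdot t + \tau\varphi(t)$, the exponents $\beta$, $k_3$, $2$ being exactly those appearing in the curvature bounds (1.1) and \eqref{k_3}), the weight costs at most $2^{js(\beta+k_3+2)}$, and this is absorbed by the gain $2^{-j(\beta/2-\alpha)}$ precisely when $s(\beta+k_3+2)\le \beta/2-\alpha$, i.e. $s\le s_0$; on the complementary high-frequency block one instead exploits the extra decay of $\widehat{m_j}$ coming from integration by parts in $t$ against the non-stationary phase $\xi\cdot t+\tau\varphi(t)$, which is more than enough to pay for the polynomial weight. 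Summing a geometric series in $j$ then gives part (1).

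For part (2), the natural route is complex interpolation with change of power weights, in the spirit of Stein--Weiss / the analytic-family method used for the original $L^p$ range $\beta/(\beta-\alpha)<p<\beta/\alpha$. I would introduce an analytic family of operators $\mathcal{R}^z$ by attaching a factor like $2^{-jz}$ (equivalently a fractional smoothing of order depending on $z$) to each dyadic block $\mathcal{R}_j$, arranged so that: at $\operatorname{Re} z = 0$ one recovers the unweighted $L^p$-boundedness of $\mathcal{R}$ on the open interval $\alpha/\beta<1/p<1-\alpha/\beta$ from Theorem \ref{main}(2); and at $\operatorname{Re} z = 1$ (or at the endpoint corresponding to the maximal Sobolev exponent) one has the $L^2\to L^2_{s_0}$ estimate from part (1). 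Stein's interpolation theorem for analytic families then yields, for $\theta\in[0,1]$ with $s=\theta s_0$, boundedness from $L^{p}$ to $L^{p}_{s}$ where $1/p$ runs over the interval obtained by linearly interpolating the endpoint constraints: the lower endpoint $(1-\theta)(\alpha/\beta)+\theta(1/2)$ and the upper endpoint $(1-\theta)(1-\alpha/\beta)+\theta(1/2)$, which, writing $\theta=s/s_0$, is exactly the displayed range. One must check that the family is admissible (analytic in the open strip, continuous and of admissible growth on its closure) and that the smoothing of order $\operatorname{Re} z \cdot s_0$ genuinely lands in the Sobolev scale; these are routine once the dyadic decomposition is in place.

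The main obstacle, I expect, is the first one: establishing cleanly the two-regime estimate for $\widehat{m_j}$ with the sharp exponent $\beta+k_3+2$ in the frequency threshold. This requires a careful second-derivative (van der Corput) analysis of the phase $\xi\cdot t+\tau\varphi(t)-|t|^{-\beta}$ on the annulus $|t|\sim 2^{-j}$, using the monotonicity hypothesis $\varphi'\varphi''>0$ and the two-sided bounds $k_1\le r\varphi''/\varphi'\le k_2$ to control $\varphi$ and $\varphi'$ on that annulus, and crucially the third-derivative bound \eqref{k_3} to control $\varphi'''$ (hence the error in stationary phase and the size of the region where the phase is stationary). Getting the weight exponent to be exactly $\beta+k_3+2$ — rather than something larger — is where the hypotheses (1)--(3) on $\varphi$ are used most delicately, and it is the step on which the stated value of $s_0$ depends. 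Once that estimate is isolated, parts (1) and (2) follow by summation and interpolation as above.
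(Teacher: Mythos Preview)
Your overall plan --- extract decay $|m(\xi)|\lesssim|\xi|^{-s_0}$ from the dyadic multiplier bounds behind Theorem~\ref{main}, then complex-interpolate against the $L^p$ result --- is exactly the paper's. But the mechanism you propose for the exponent $\beta+k_3+2$ is wrong, and as written would not produce it. You attribute $k_3$ to a stationary-phase error term involving $\varphi'''$; in fact the paper simply reuses the bound $|m_l(\xi)|\lesssim 2^{\alpha l}/\sqrt{\lambda}$ already obtained via Lemma~\ref{lambda} and tracks how $\lambda$ depends on $\xi$. The key step is to integrate the hypothesis $|r\varphi'''/\varphi''|\le k_3$ to obtain the growth bound $|\varphi''(x)|\le|\varphi''(1)|\,x^{k_3}$ for $x\ge 1$, which controls the size of the term $2^{-2l}|\xi_{n+1}|\sup_r|\varphi''(2^{-l}r)|$ appearing in $\lambda$. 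The crossover between that term and $2^{\beta l}$ then occurs at $2^{l(\beta+k_3+2)}\sim|\xi_{n+1}|$, and summing the geometric series $\sum_l 2^{\alpha l}/\sqrt{\lambda}$ on either side of this threshold yields $|m(\xi)|\lesssim|\xi_{n+1}|^{(\alpha-\beta/2)/(\beta+k_3+2)}$ in the regime $|\xi_{n+1}|\ge|\xi'|$. The complementary regime $|\xi'|\ge|\xi_{n+1}|$, which you do not separate out, is handled by comparing $2^{\beta l}$ with $2^{-l}|\xi'|$ instead and gives the better exponent $\beta+1$. No new oscillatory-integral analysis is required for part (1); the entire argument is a summation of estimates already in hand.

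For part (2), attaching a factor $2^{-jz}$ to the $j$-th dyadic block is smoothing in the \emph{spatial} scale $2^{-j}$ of the kernel, not in the frequency variable $\xi$, so this family does not land in $L^p_s$ and cannot serve as the interpolating family. The paper uses the Bessel-potential family $\mathcal{R}_z f=\mathcal{F}^{-1}\bigl[(1+|\cdot|^2)^{z/2}\,m\,\widehat{f}\,\bigr]$: at $\operatorname{Re}z=0$ this is $\mathcal{R}$ itself (giving the $L^{p_0}$ endpoint from Theorem~\ref{main}), while the $L^2$-boundedness of $\mathcal{R}_{s_0}$ is precisely the statement of part (1); Stein's interpolation for analytic families then delivers the displayed range for $1/p$. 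Your endpoint identifications and the resulting interval are correct --- only the analytic family needs to be defined on the Fourier side rather than through the dyadic decomposition.
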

    This article is organized as follows. In Section 2, we provide certain well-known results that are employed at various instances in our work. Next, in Section 3, we prove a couple of lemmata that are useful in proving our main results. The proofs of Theorem \ref{main} and Theorem \ref{L2SobolevEstimate} are given in Section 4. Finally, we give a few concluding remarks in Section 5 and mention a few future directions that stem from this work. 

    We deal with the boundedness problem of the operator $\mathcal{R}$ in this article. However, we are not much concerned about constants that might appear at various steps in the estimates. We use the symbol `$\lesssim$' to mean $\leq$ up to a multiplicative constant. Also, the symbol `$A\asymp B$' is used to mean there exist $ C_1, C_2>0$ such that $C_1A\leq B\leq C_2A.$
\section{Preliminaries}\label{sec2}
In this section, we recall several well-known results that will be utilized in the subsequent sections. We begin by stating the Van der corput's lemma that gives a bound for certain oscillatory integrals.
    \begin{theorem}[Van der corput's lemma \cite{MR1232192}]\label{van der thm}
        Suppose $\varphi$ is a real-valued and smooth function in $(a,b)$, and that $|\varphi^{(k)}(x)|\geq 1$ for all $x\in (a,b)$. Then,
        $$\left|\int_a^b e^{i\lambda \varphi(x)}\psi(x)\,\mathrm{d}x\right|\leq c_k\lambda^{-1/k}\left[|\psi(b)|+\int_a^b|\psi^\prime (x)|\,\mathrm{d}x\right],$$
        holds when:
        \begin{enumerate}
            \item $k\geq 2,$ or 
            \item $k=1$ and $\varphi^\prime(x)$ is monotonic. 
        \end{enumerate}
        The bound $c_k$ is independent of $\varphi$ and $\lambda.$
    \end{theorem}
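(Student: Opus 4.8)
The plan is to establish the first-derivative case $k=1$ directly by an integration by parts, and then to obtain all higher-order cases $k\geq 2$ by induction on $k$, localizing the estimate to the short sub-interval on which $\varphi^{(k-1)}$ is small.

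For $k=1$ I would use $|\varphi'|\geq 1$ to write $e^{i\lambda\varphi}=(i\lambda\varphi')^{-1}(e^{i\lambda\varphi})'$ and integrate by parts. The boundary contributions are $O(\lambda^{-1})$ times the sup-norm of $\psi$, and the remaining integral involves $(\psi/\varphi')'=\psi'/\varphi'-\psi\varphi''/(\varphi')^2$: the first summand is absorbed by $\lambda^{-1}\int_a^b|\psi'|$ since $|\varphi'|\geq1$, and the second by $\lambda^{-1}\sup|\psi|\cdot\int_a^b|\varphi''|/(\varphi')^2$. Here monotonicity of $\varphi'$ is exactly what makes $\varphi''$ of constant sign, so that last integral telescopes to $|1/\varphi'(a)-1/\varphi'(b)|\leq 2$. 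Replacing $\sup|\psi|$ and $|\psi(a)|$ by $|\psi(b)|+\int_a^b|\psi'|$ via the fundamental theorem of calculus then gives the stated bound with an absolute constant $c_1$, valid for every $\lambda>0$.

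For the inductive step I would assume the lemma for $k-1\geq 1$. Given $|\varphi^{(k)}|\geq1$ on $(a,b)$, the derivative $\varphi^{(k-1)}$ is strictly monotonic and hence has at most one zero; let $x_0\in[a,b]$ be the point where $|\varphi^{(k-1)}|$ is minimal. For a parameter $\delta>0$, the mean value theorem gives $|\varphi^{(k-1)}(x)|\geq\delta$ whenever $|x-x_0|\geq\delta$, so on $(a,b)\cap\{|x-x_0|\geq\delta\}$ --- a union of at most two intervals --- the rescaled phase $\varphi/\delta$ satisfies $|(\varphi/\delta)^{(k-1)}|\geq1$ (with monotonic derivative when $k-1=1$). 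Applying the induction hypothesis there, with $\lambda$ replaced by $\lambda\delta$, yields a bound $\lesssim(\lambda\delta)^{-1/(k-1)}\bigl(|\psi(b)|+\int_a^b|\psi'|\bigr)$, once one notes that restricting to a sub-interval changes the data $|\psi(b)|$ and $\int|\psi'|$ only by absolute constants. On the complementary interval about $x_0$, of length $\leq2\delta$ (and interpreted as all of $(a,b)$ if $2\delta\geq b-a$), I would estimate trivially by $2\delta\sup|\psi|$. Summing, the total is $\lesssim\bigl((\lambda\delta)^{-1/(k-1)}+\delta\bigr)\bigl(|\psi(b)|+\int_a^b|\psi'|\bigr)$, and the choice $\delta=\lambda^{-1/k}$ makes both terms equal to $\lambda^{-1/k}$, giving the lemma for $k$ with $c_k$ depending only on $c_{k-1}$ and $k$.

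The step I expect to require the most care is the inductive reduction, and within it three points: recognizing that $|\varphi^{(k)}|\geq1$ forces $\varphi^{(k-1)}$ to be monotonic, so that the bad set $\{|\varphi^{(k-1)}|<\delta\}$ is a single short interval about $x_0$ rather than something complicated; the rescaling $\varphi\mapsto\varphi/\delta$, which brings the problem within reach of the $(k-1)$-case at the cost of a factor $(\lambda\delta)^{-1/(k-1)}$; and the optimization $\delta=\lambda^{-1/k}$, which balances the crude bound on the bad interval against the inductive bound off it. A minor recurring point is that for any sub-interval $[c,d]\subseteq[a,b]$ one has $|\psi(d)|+\int_c^d|\psi'|\leq 2\bigl(|\psi(b)|+\int_a^b|\psi'|\bigr)$, which is precisely what allows the right-hand side of the lemma to be stated with the single fixed endpoint $b$.
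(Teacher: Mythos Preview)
Your argument is correct and is precisely the classical proof: integration by parts for $k=1$ using monotonicity of $\varphi'$ to telescope $\int_a^b|\varphi''|/(\varphi')^2$, followed by induction on $k$ via the decomposition into $\{|x-x_0|<\delta\}$ and its complement with the optimizing choice $\delta=\lambda^{-1/k}$. The paper itself does not supply a proof of this statement---it is recorded in the preliminaries section as a known result with a citation to Stein's \emph{Harmonic Analysis} \cite{MR1232192}, where exactly this argument appears---so there is nothing further to compare.
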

    Note that if the lower bound for $|\varphi^{(k)}(x)|$ is some constant $c>0$, instead of $1$, then we have 
    \begin{equation}\label{van der}\left|\int_a^b e^{i\lambda \varphi(x)}\psi(x)\,\mathrm{d}x\right|\leq c_k(\lambda c)^{-1/k}\left[|\psi(b)|+\int_a^b|\psi^\prime (x)|\,\mathrm{d}x\right].\end{equation}
    We now state the mean value theorem for vector valued functions on open subsets of $\R{n}.$  
    \begin{theorem}[Mean Value Theorem \cite{MR344384}]\label{mean} 
        Let $S$ be an open subset of $\R{n}$ and assume that $f:S\to \R{m}$ is differentiable at each point of $S$. Let $x,y\in S$ and $L(x,y)\subseteq S$, where $L(x,y)$ is the line segment joining the points $x$ and $y$. Then for every $a\in\R{m}$ there is a point $z\in L(x,y)$ such that 
        $$a\cdot\left(f(y)-f(x)\right)=a\cdot \left(Df(z)(y-x)\right),$$
        where $\cdot$ denotes the Euclidean inner product in $\R{m}.$
    \end{theorem}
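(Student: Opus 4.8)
The plan is the standard reduction of the multivariable statement to the classical one–dimensional Mean Value Theorem by composing $f$ with the affine parametrization of the segment $L(x,y)$ and with the fixed linear functional $v\mapsto a\cdot v$. Concretely, first I would fix $x,y\in S$ with $L(x,y)\subseteq S$ and $a\in\R{m}$, and define $\gamma:[0,1]\to S$ by $\gamma(t)=x+t(y-x)$, which is well defined with image in $S$ precisely because $L(x,y)\subseteq S$; then set $g:[0,1]\to\mathbb{R}$, $g(t)=a\cdot f(\gamma(t))$. Since $S$ is open and $\gamma$ maps a neighbourhood of each $t\in[0,1]$ into $S$, I can view $g$ as the restriction of a function defined on an open interval containing $[0,1]$, so differentiability on the closed interval (including the endpoints) is not an issue.

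Next I would check that $g$ is differentiable on $[0,1]$ and compute $g'$. The map $\gamma$ is affine, hence differentiable with $D\gamma(t)=y-x$ (as a vector), and $f$ is differentiable at each point of $S$ by hypothesis; the linear functional $L_a(v)=a\cdot v$ is its own derivative. By the chain rule for differentiable maps between Euclidean spaces, $g=L_a\circ f\circ\gamma$ is differentiable with
\[
g'(t)=L_a\bigl(Df(\gamma(t))\,(y-x)\bigr)=a\cdot\bigl(Df(\gamma(t))(y-x)\bigr),\qquad t\in[0,1].
\]
Here I would be slightly careful to invoke differentiability (not merely existence of partials) of $f$ so that the chain rule applies verbatim, which is exactly what the hypothesis gives us.

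Finally I would apply the classical Mean Value Theorem to the real-valued function $g$, which is continuous on $[0,1]$ and differentiable on $(0,1)$: there exists $c\in(0,1)$ with $g(1)-g(0)=g'(c)$. Unwinding the definitions, $g(1)-g(0)=a\cdot f(y)-a\cdot f(x)=a\cdot(f(y)-f(x))$, while $g'(c)=a\cdot(Df(z)(y-x))$ with $z:=\gamma(c)=x+c(y-x)\in L(x,y)$. This yields the claimed identity $a\cdot(f(y)-f(x))=a\cdot(Df(z)(y-x))$.

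There is essentially no serious obstacle here; the result is elementary. The only points that require any attention are (i) using the hypothesis $L(x,y)\subseteq S$ to guarantee $g$ is defined on all of $[0,1]$ and that one may speak of its derivative up to the endpoints, and (ii) invoking the chain rule in the correct form, i.e.\ for functions that are genuinely (Fréchet) differentiable, so that $g'(t)=a\cdot(Df(\gamma(t))(y-x))$ holds without extra regularity assumptions. Everything else is the one-variable Mean Value Theorem applied to $g$.
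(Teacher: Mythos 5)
Your proof is correct and is the standard argument (reduce to the one-variable Mean Value Theorem via $g(t)=a\cdot f(x+t(y-x))$ and the chain rule), which is precisely the proof in Apostol's text that the paper cites without reproducing. No issues.
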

    For the integration of function with radial symmetry in $\R{n}$, we use the following decomposition of the Lebesgue measure into radial and angular components. 
    \begin{theorem}[Polar decomposition in $\R{n}$ \cite{MR1970295}]
    Any point $x$ in $\R{n}\backslash\{0\}$ can be uniquely written as $x=r\gamma$, where $\gamma$ lies on the unit sphere $\mathbb{S}^{n-1}\subseteq \R{n}$ and $r>0$. We also have 
    $$\int_{\R{n}}f(x)\,\mathrm{d}x=\int_{\mathbb{S}^{n-1}}\int_0^\infty f(r\gamma)r^{n-1}\,\mathrm{d}r \mathrm{d}\gamma,$$
    where $\mathrm{d}\gamma$ denotes the surface measure on the sphere $\mathbb{S}^{n-1}$. 
    \end{theorem}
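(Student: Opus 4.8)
The plan is to treat this as the classical polar-coordinate change of variables, obtained by combining the dilation-invariance of Lebesgue measure with a uniqueness-of-measures argument. \emph{Uniqueness} of the representation is immediate: given $x\in\R{n}\setminus\{0\}$, any decomposition $x=r\gamma$ with $r>0$ and $\gamma\in\mathbb{S}^{n-1}$ forces $r=|x|$ (take Euclidean norms) and hence $\gamma=x/|x|$, and conversely these values work. Thus the map $\Phi:\R{n}\setminus\{0\}\to(0,\infty)\times\mathbb{S}^{n-1}$, $\Phi(x)=(|x|,x/|x|)$, is a bijection with inverse $(r,\gamma)\mapsto r\gamma$; since $\{0\}$ is Lebesgue-null, integrating over $\R{n}$ coincides with integrating over $\R{n}\setminus\{0\}$.

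Next I would push forward Lebesgue measure $\lambda$ under $\Phi$ and show the resulting measure $\Phi_*\lambda$ on $(0,\infty)\times\mathbb{S}^{n-1}$ is a product measure. For a Borel set $E\subseteq\mathbb{S}^{n-1}$ and $0<a<b<\infty$, write $E_{a,b}:=\{r\gamma:a<r<b,\ \gamma\in E\}=\Phi^{-1}\big((a,b)\times E\big)$. Since $E_{0,b}=b\cdot E_{0,1}$ and $\lambda(tA)=t^n\lambda(A)$ for $t>0$, we get $\lambda(E_{0,b})=b^n\lambda(E_{0,1})$, and hence (discarding the null sphere $\{r=a\}$) $\lambda(E_{a,b})=(b^n-a^n)\lambda(E_{0,1})$. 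Defining the surface measure $\mathrm{d}\gamma$ on $\mathbb{S}^{n-1}$ by $\sigma(E):=n\,\lambda(E_{0,1})$, this reads
$$(\Phi_*\lambda)\big((a,b)\times E\big)=\lambda(E_{a,b})=\frac{b^n-a^n}{n}\,\sigma(E)=\Big(\int_a^b r^{n-1}\,\mathrm{d}r\Big)\sigma(E),$$
so $\Phi_*\lambda$ agrees with the product measure $\big(r^{n-1}\,\mathrm{d}r\big)\otimes\sigma$ on all rectangles $(a,b)\times E$. These rectangles form a $\pi$-system generating the product $\sigma$-algebra, and both measures are $\sigma$-finite, so by the $\pi$--$\lambda$ (Dynkin) uniqueness theorem $\Phi_*\lambda=\big(r^{n-1}\,\mathrm{d}r\big)\otimes\sigma$.

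Finally, for measurable $f\geq0$ the abstract change-of-variables formula for pushforwards together with the previous step gives
$$\int_{\R{n}}f(x)\,\mathrm{d}x=\int_{(0,\infty)\times\mathbb{S}^{n-1}}f(r\gamma)\,\mathrm{d}(\Phi_*\lambda)(r,\gamma)=\int_{(0,\infty)\times\mathbb{S}^{n-1}}f(r\gamma)\,r^{n-1}\,\mathrm{d}r\,\mathrm{d}\gamma,$$
and Tonelli's theorem rewrites the right-hand side as $\int_{\mathbb{S}^{n-1}}\int_0^\infty f(r\gamma)\,r^{n-1}\,\mathrm{d}r\,\mathrm{d}\gamma$; decomposing a general integrable (complex-valued) $f$ into positive/negative and real/imaginary parts extends the identity by linearity. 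The one step requiring genuine care — the main obstacle — is establishing that $\Phi_*\lambda$ factors as a product; this is precisely where dilation-invariance and the Dynkin uniqueness argument are used, while the remaining steps are bookkeeping. (Should one prefer the differential-geometry route instead, one covers $\mathbb{S}^{n-1}$ by charts $\psi:U\subseteq\R{n-1}\to\mathbb{S}^{n-1}$, parametrizes a sector of $\R{n}$ by $(r,u)\mapsto r\psi(u)$, and checks using $\psi(u)\perp\partial_{u_i}\psi(u)$ that the Jacobian determinant equals $r^{n-1}$ times the Riemannian volume density of $\psi$; patching with a partition of unity yields the same formula and simultaneously identifies $\mathrm{d}\gamma$ with the Riemannian surface measure on $\mathbb{S}^{n-1}$.)
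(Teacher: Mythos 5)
The paper does not prove this statement; it simply cites it as a preliminary from Stein and Shakarchi's \emph{Fourier Analysis} (reference \cite{MR1970295}), so there is no in-paper argument to compare against. Your proof is correct and is the standard measure-theoretic derivation: you reduce the claim to showing $\Phi_*\lambda = (r^{n-1}\,\mathrm{d}r)\otimes\sigma$, verify this on rectangles $(a,b)\times E$ using dilation-invariance, and close the gap with the $\pi$--$\lambda$ theorem and Tonelli. The only point worth flagging is a definitional one: when you set $\sigma(E):=n\,\lambda(E_{0,1})$, you have \emph{defined} a measure on $\mathbb{S}^{n-1}$ rather than derived the formula for a pre-existing ``surface measure''; for the statement as the paper uses it (where $\mathrm{d}\gamma$ is meant to be the usual surface/Riemannian measure on the sphere) one should either adopt your definition as the definition, as Folland does, or supply the identification with the Riemannian volume density --- which you correctly sketch in your closing parenthetical via the chart computation and Jacobian $r^{n-1}$. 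Either way the argument is sound and self-contained, and it proves exactly what the paper invokes.
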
 
    Now we state an analogue of the polar decomposition over the sphere $\mathbb{S}^{n-1}$.
    \begin{theorem}[\cite{MR3887684}] Let $f:\mathbb{S}^{n-1}\to \mathbb{C}$ be measurable. Then
        \begin{equation}\int_{\mathbb{S}^{n-1}}f(\gamma)d\gamma=\int_0^\pi \int_{\mathbb{S}^{n-2}}f(\sin \theta \omega, \cos \theta)(\sin \theta)^{n-2}\,\mathrm{d}\omega\mathrm{d}\theta.\end{equation}
        Here $\mathrm{d}\omega$ denotes the surface measure on the sphere $\mathbb{S}^{n-2}.$
    \end{theorem}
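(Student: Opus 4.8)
The plan is to deduce this iterated formula on $\mathbb{S}^{n-1}$ directly from the polar decomposition of Lebesgue measure on $\R{n}$ stated just above, by splitting $\R{n}=\R{{n-1}}\times\mathbb{R}$, using the polar decomposition a second time on the $\R{{n-1}}$ factor, and then carrying out an elementary planar polar change of variables in the remaining two coordinates. Testing against a conveniently normalized radial function lets the radial variable be integrated away, and the claimed identity is what remains.

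Concretely, I would first assume $f\ge 0$, so that every interchange of integrals below is legitimate by Tonelli and the asserted identity makes sense in $[0,\infty]$; the case of complex (or $L^1$) $f$ then follows by applying this case to $|f|$ to get absolute convergence and splitting $f$ into real/imaginary and positive/negative parts. Fix a measurable $\chi:(0,\infty)\to[0,\infty)$ with $\int_0^\infty\chi(r)r^{n-1}\,\mathrm{d}r=1$ (e.g.\ $\chi=n\,\mathbf{1}_{(0,1]}$) and put $F(x):=f(x/|x|)\,\chi(|x|)$. The polar decomposition in $\R{n}$ gives
\[\int_{\R{n}}F(x)\,\mathrm{d}x=\Big(\int_{\mathbb{S}^{n-1}}f(\gamma)\,\mathrm{d}\gamma\Big)\Big(\int_0^\infty\chi(r)r^{n-1}\,\mathrm{d}r\Big)=\int_{\mathbb{S}^{n-1}}f(\gamma)\,\mathrm{d}\gamma .\]
On the other hand, writing $x=(x',x_n)\in\R{{n-1}}\times\mathbb{R}$, using Fubini, and then applying the polar decomposition in $\R{{n-1}}$ with $x'=\rho\omega$ ($\rho>0$, $\omega\in\mathbb{S}^{n-2}$), one obtains the triple integral over $(\omega,x_n,\rho)\in\mathbb{S}^{n-2}\times\mathbb{R}\times(0,\infty)$ of $f\big((\rho\omega,x_n)/|x|\big)\,\chi(|x|)\,\rho^{n-2}$, with $|x|=\sqrt{\rho^2+x_n^2}$. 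Now I would introduce $(r,\theta)\in(0,\infty)\times(0,\pi)$ via $\rho=r\sin\theta$, $x_n=r\cos\theta$; this is a bijection onto $\{\rho>0\}$ (modulo a null set) whose Jacobian has absolute value $r$, so $\rho^{n-2}\,\mathrm{d}\rho\,\mathrm{d}x_n=r^{n-1}(\sin\theta)^{n-2}\,\mathrm{d}r\,\mathrm{d}\theta$, while $|x|=r$ and $(\rho\omega,x_n)/|x|=(\sin\theta\,\omega,\cos\theta)$. After substituting and factoring out $\int_0^\infty\chi(r)r^{n-1}\,\mathrm{d}r=1$, the second evaluation of $\int_{\R{n}}F$ becomes $\int_{\mathbb{S}^{n-2}}\int_0^\pi f(\sin\theta\,\omega,\cos\theta)(\sin\theta)^{n-2}\,\mathrm{d}\theta\,\mathrm{d}\omega$. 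Equating the two evaluations and swapping the $\theta$- and $\omega$-integrals by Fubini yields the formula.

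Since the argument is essentially a change-of-variables bookkeeping, I do not anticipate a genuine obstacle; the only care needed is in justifying the iterated Tonelli/Fubini (dealt with by first taking $f\ge 0$), in checking that $(\rho,x_n)\mapsto(r,\theta)$ is a bijection of $(0,\infty)\times(0,\pi)$ onto $\{\rho>0\}$ up to measure zero with the stated Jacobian, and in the boundary case $n=2$, where $\mathbb{S}^{n-2}=\mathbb{S}^{0}=\{-1,+1\}$ carries counting measure and $(\sin\theta)^{n-2}\equiv 1$, so the identity collapses to the familiar parametrization $\theta\mapsto(\pm\sin\theta,\cos\theta)$ of the circle.
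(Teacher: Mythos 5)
Your argument is correct. Note, however, that the paper does not actually prove this identity---it is quoted as a preliminary with a citation to Lee's \emph{Introduction to Riemannian Manifolds}, so there is no in-paper proof to compare with. The source derives the formula geometrically: one realizes $\mathbb{S}^{n-1}$ (minus its two poles) as a warped product $\bigl((0,\pi)\times\mathbb{S}^{n-2},\ d\theta^2+\sin^2\theta\,g_{\mathbb{S}^{n-2}}\bigr)$, reads off the Riemannian volume form $(\sin\theta)^{n-2}\,d\theta\wedge d\mathrm{vol}_{\mathbb{S}^{n-2}}$, and integrates. Your route is instead purely measure-theoretic: you exploit the $\mathbb{R}^n$ polar decomposition already stated in the same section, test it against a radial cutoff $F(x)=f(x/|x|)\chi(|x|)$ to isolate the spherical integral, and then recompute $\int_{\mathbb{R}^n}F$ by slicing $\mathbb{R}^n=\mathbb{R}^{n-1}\times\mathbb{R}$, polar-decomposing the $\mathbb{R}^{n-1}$ factor, and performing a planar polar substitution $(\rho,x_n)=(r\sin\theta,r\cos\theta)$ whose Jacobian $r$ combines with $\rho^{n-2}$ to produce exactly $r^{n-1}(\sin\theta)^{n-2}$. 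The Tonelli/Fubini bookkeeping, the bijectivity of the planar change of variables up to a null set, and the degenerate case $n=2$ (where $\mathbb{S}^0$ carries counting measure and the weight is $1$) are all handled correctly. The geometric derivation is more structural and generalizes immediately to arbitrary warped-product metrics; yours is more elementary, stays entirely within Lebesgue integration, and fits naturally alongside the $\mathbb{R}^n$ polar formula the paper already assumes---a perfectly reasonable trade-off for a preliminaries section of a harmonic-analysis paper.
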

    We now state the Riesz-Thorin interpolation theorem which is a foundational result in harmonic analysis. It provides a powerful tool for proving boundedness of linear operators between $L^p$ spaces. 
    \begin{theorem}[\cite{MR2827930}]\label{Riesz} 
        Suppose $T$ is a linear mapping from $L^{p_0}+L^{p_1}$ to $L^{q_0}+L^{q_1}$. Assume that $T$ is bounded from $L^{p_0}$ to $L^{q_0}$ and from $L^{p_1}$ to $L^{q_1}$. That is, we have
        $$\left\{\begin{aligned}
        \|T(f)\|_{L^{q_0}}&\leq M_0\|f\|_{L^{p_0}},\\
        \|T(f)\|_{L^{q_1}}&\leq M_1\|f\|_{L^{p_1}}.
        \end{aligned}\right.$$
        Then $T$ is bounded from $L^p$ to $L^q$, with
        $$\|T(f)\|_{L^q}\leq M\|f\|_{L^p},$$
        whenever the pair $(p,q)$ is given by 
        $$\frac{1}{p}=\frac{1-t}{p_0}+\frac{t}{p_1}\quad \text{and} \quad \frac{1}{q}=\frac{1-t}{q_0}+\frac{t}{q_1},$$
        for some $0\leq t\leq 1$. Moreover, the bound $M$ satisfies $M\leq M_0^{1-t}M_1^t. $
    \end{theorem}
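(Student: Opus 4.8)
The plan is to prove this by the classical complex-interpolation argument, reducing everything to the Hadamard three-lines lemma. By density of simple functions in $L^p$ (for $p<\infty$; the degenerate case $p_0=p_1$ is trivial) it suffices to establish
$$\left\|T f\right\|_{L^q}\le M_0^{1-t}M_1^t\left\|f\right\|_{L^p}$$
for every simple $f$ with $\|f\|_{L^p}=1$. When $q<\infty$, by duality this in turn reduces to showing $\left|\int (Tf)\,g\,\right|\le M_0^{1-t}M_1^t$ for every simple $g$ with $\|g\|_{L^{q'}}=1$. Write $f=\sum_j c_j e^{i\theta_j}\chi_{E_j}$ and $g=\sum_k d_k e^{i\psi_k}\chi_{F_k}$ with $c_j,d_k>0$ and the $E_j,F_k$ of finite measure.

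Next I introduce the interpolating exponents $\tfrac{1}{p(z)}=\tfrac{1-z}{p_0}+\tfrac{z}{p_1}$ and $\tfrac{1}{q'(z)}=\tfrac{1-z}{q_0'}+\tfrac{z}{q_1'}$ for $z$ in the closed strip $\overline{S}=\{0\le\operatorname{Re}z\le1\}$, and the analytic families
$$f_z=\sum_j c_j^{\,p/p(z)}e^{i\theta_j}\chi_{E_j},\qquad g_z=\sum_k d_k^{\,q'/q'(z)}e^{i\psi_k}\chi_{F_k},$$
so that $f_t=f$ and $g_t=g$. Setting $F(z)=\int (Tf_z)\,g_z$ and using linearity of $T$, $F$ is a finite linear combination of the functions $c_j^{\,p/p(z)}d_k^{\,q'/q'(z)}$ with constant coefficients $e^{i(\theta_j+\psi_k)}\int (T\chi_{E_j})\,\chi_{F_k}$; hence $F$ is entire and, since $\operatorname{Re}(p/p(z))$ and $\operatorname{Re}(q'/q'(z))$ remain bounded on $\overline{S}$, it is bounded and continuous on $\overline{S}$.

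Then I estimate $F$ on the two edges of the strip. On $\operatorname{Re}z=0$ one has $|c_j^{\,p/p(iy)}|=c_j^{\,p/p_0}$, whence $\|f_{iy}\|_{L^{p_0}}^{p_0}=\sum_j c_j^{\,p}|E_j|=\|f\|_{L^p}^p=1$, so $\|f_{iy}\|_{L^{p_0}}=1$, and likewise $\|g_{iy}\|_{L^{q_0'}}=1$. Using the hypothesis $\|T(\cdot)\|_{L^{q_0}}\le M_0\|\cdot\|_{L^{p_0}}$ together with Hölder's inequality gives $|F(iy)|\le \|Tf_{iy}\|_{L^{q_0}}\|g_{iy}\|_{L^{q_0'}}\le M_0$. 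The identical computation on $\operatorname{Re}z=1$ yields $|F(1+iy)|\le M_1$. The Hadamard three-lines lemma applied to $F$ on $\overline{S}$ now gives $|F(t)|\le M_0^{1-t}M_1^{t}$, and since $F(t)=\int (Tf)\,g$ this is precisely the required bound; taking the supremum over admissible $g$ and then invoking density completes the proof when $q<\infty$. If $q=\infty$, then necessarily $q_0=q_1=\infty$; here one tests directly against $g\in L^1$ with $\|g\|_{L^1}=1$, keeping $g_z\equiv g$ fixed (so only $f_z$ varies), and the same three-lines argument applies.

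The substantive care is all in the reduction and setup: interpreting $p/p(z)$ correctly when one or both of $p_0,p_1$ is infinite (and observing $f_z\equiv f$ when $p_0=p_1$, so the family degenerates harmlessly), checking that $F$ is genuinely bounded on the closed strip so that the three-lines lemma is applicable, and treating the endpoint $q=\infty$ together with the symmetric degenerate configurations separately. Each of these points is routine, but they are where essentially all the bookkeeping of the argument resides.
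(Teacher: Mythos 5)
The paper does not prove this theorem; it cites it from Stein and Shakarchi's \emph{Functional Analysis} as a preliminary result. Your argument is the standard complex-interpolation proof via the Hadamard three-lines lemma --- reduce to simple $f$, dualize against simple $g$, introduce the analytic families $f_z,g_z$ with interpolating exponents, check boundedness and analyticity of $F(z)=\int (Tf_z)g_z$ on the closed strip, estimate on the two edges using H\"older and the endpoint hypotheses, and invoke three-lines --- and this is essentially the proof given in the cited reference, so there is no methodological divergence to compare. The setup and the edge estimates are all correct. Two small points to tighten if you were to write this out in full: the parenthetical ``the degenerate case $p_0=p_1$ is trivial'' is slightly misleading, since when $p_0=p_1$ (so possibly $p=\infty$) the conclusion is by no means trivial --- what is true is that the $f$-side family degenerates to $f_z\equiv f$ and only the $g_z$-family carries the interpolation, so density of simple functions is not needed on the $f$-side; and the endpoint discussion for $q=\infty$ forcing $q_0=q_1=\infty$ holds only for $0<t<1$, the cases $t\in\{0,1\}$ being the assumed estimates themselves. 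Both are exactly the kind of routine bookkeeping you flag at the end, and neither affects the validity of the argument.
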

    We now proceed to introduce $L^p$-based Sobolev spaces that we use in Theorem \ref{L2SobolevEstimate}
    \begin{definition}[Sobolev spaces \cite{MR3243741}]
        Let $s$ be a real number and let $1<p<\infty$. The Sobolev space $L_s^p(\R{n})$ is defined as the space of all tempered distributions $u$ in $\mathscr{S}^\prime (\R{n})$ with the property that 
        $$((1+|\cdot|^2)^{s/2}\widehat{u})^{\vee}$$
        is an element of $L^p(\R{n})$. For such distributions $u$ we define the Sobolev norm as
        $$\|u\|_{L_s^p}=\left\|((1+|\cdot |^2)^{s/2}\widehat{u})^\vee \right\|_{L^p(\R{n})}.$$
    \end{definition}
\section{Preparatory Lemmata}
We first prove two lemmata that play a crucial role in the proof of our main results. 
\begin{lemma}\label{lambda}
    Fix an $l\in\mathbb{Z}$. Let $g(r,\theta)=2^{-l}|\xi^\prime|r\cos(\theta)+\xi_{n+1}\varphi(2^{-l}r)+2^{\beta l}r^{-\beta}.$ Let $\epsilon>0$ be such that
			\begin{equation}\label{epsilon}\epsilon<\min \left\{\frac{1}{2\sqrt{2}},\frac{1}{4k_2(3\beta+7)},\frac{\beta}{(4+3k_2)2^\beta},\frac{1}{8k_2k_3(3\beta+7)}\right\},\end{equation}
			and let $$\begin{aligned}\lambda=\max\left\{2^{\beta l},2^{-l}|\xi^\prime|,|\xi_{n+1}|2^{-2l}\sup_{r\in\left[a_l,b_l\right]}|\varphi^{\prime \prime}(r)|,|\xi_{n+1}|2^{-3l}\sup_{r\in\left[a_l,b_l\right]}|\varphi^{\prime\prime\prime}(r)|\right\},\end{aligned}$$
		where $[a_l,b_l]=[2^{-l-1},2^{-l+1}].$ Then,
			$$\max\left\{|g_r^\prime(r,\theta)|,|g_\theta^\prime(r,\theta)|,|g_{rr}^{\prime \prime}(r,\theta)|,|g_{\theta \theta}^{\prime \prime }(r,\theta)|\right\}\geq \epsilon \lambda,$$ 
			for all $(r,\theta)\in[1/2,2]\times [0,\pi].$
\end{lemma}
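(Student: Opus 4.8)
The plan is to argue by contradiction: suppose all four of the quantities $|g_r'|$, $|g_\theta'|$, $|g_{rr}''|$, $|g_{\theta\theta}''|$ are strictly less than $\epsilon\lambda$ at some point $(r,\theta)\in[1/2,2]\times[0,\pi]$, and derive a contradiction with the choice of $\epsilon$ in \eqref{epsilon}. First I would compute the derivatives explicitly. Writing $a=2^{-l}|\xi'|$, $b=\xi_{n+1}$, $c=2^{\beta l}$, we have $g_\theta'=-ar\sin\theta$, $g_{\theta\theta}''=-ar\cos\theta$, so $|g_\theta'|^2+|g_{\theta\theta}''|^2=a^2r^2\geq a^2/4$, whence $|g_\theta'|<\epsilon\lambda$ and $|g_{\theta\theta}''|<\epsilon\lambda$ together force $a=2^{-l}|\xi'|\leq 2\sqrt{2}\,\epsilon\lambda$. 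Since $\epsilon<1/(2\sqrt2)$ this is consistent, but it tells us the $|\xi'|$-term is small; I will keep this bound in hand. The real content is in the $r$-derivatives: $g_r'=a\cos\theta+b\,2^{-l}\varphi'(2^{-l}r)-\beta c\,r^{-\beta-1}$ and $g_{rr}''=b\,2^{-2l}\varphi''(2^{-l}r)+\beta(\beta+1)c\,r^{-\beta-2}$.

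The key step is to play off the two terms in $g_{rr}''$ against each other. Since $r\in[1/2,2]$, the substitution $s=2^{-l}r$ ranges over $[a_l,b_l]=[2^{-l-1},2^{-l+1}]$, and the quantities $|b|2^{-2l}\varphi''(2^{-l}r)$ and $\beta(\beta+1)c\,r^{-\beta-2}$ are each comparable (up to constants depending only on $\beta$, $k_1$, $k_2$, $k_3$) to $|b|2^{-2l}\sup_{[a_l,b_l]}|\varphi''|$ and to $2^{\beta l}$ respectively — here conditions (1)–(3) on $\varphi$ guarantee that $|\varphi''|$ does not oscillate wildly over the dyadic interval, so that the sup is comparable to the value at any interior point (this uses \eqref{k_3} to control $\varphi'''$ and hence the variation of $\varphi''$). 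Now $\beta(\beta+1)c\,r^{-\beta-2}>0$ and, by assumption (1), $b\,\varphi''(2^{-l}r)$ has the sign of $b\,\varphi'(2^{-l}r)$; when these two terms have the same sign there is no cancellation in $g_{rr}''$, and $|g_{rr}''|$ is already $\gtrsim\max\{|b|2^{-2l}\sup|\varphi''|,\,2^{\beta l}\}$, contradicting $|g_{rr}''|<\epsilon\lambda$ once $\epsilon$ is small. When they have opposite signs, I instead use $g_r'$: by the $r\varphi''/\varphi'$ bound, $b\,2^{-l}\varphi'(2^{-l}r)$ is comparable to $2^l\cdot b\,2^{-2l}\varphi''(2^{-l}r)\cdot(\text{something in }[1/k_2,1/k_1])$, and $\beta c\,r^{-\beta-1}$ is comparable to $2^{\beta l}$; combining the bound on $|g_{rr}''|$ (which, in the cancellation case, forces the two positive quantities $|b|2^{-2l}|\varphi''|$ and $c r^{-\beta-2}$ to be within $\epsilon\lambda$ of each other) with the bound on $|g_r'|$ and the already-established smallness of $a$, I get that $2^{\beta l}$, $|b|2^{-2l}\sup|\varphi''|$, and $|b|2^{-3l}\sup|\varphi'''|$ are all forced to be $\lesssim\epsilon\lambda$ with an absolute constant, hence so is $\lambda=\max\{\dots\}$ itself — i.e. $\lambda\leq C\epsilon\lambda$ with $C$ depending only on $\beta,k_2,k_3$, which is impossible for $\epsilon$ as in \eqref{epsilon} (the four upper bounds on $\epsilon$ are precisely calibrated to beat the constants arising from these three comparisons and from the $a$-estimate).

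I expect the main obstacle to be bookkeeping the case distinction cleanly and tracking the explicit constants so that they match the four bounds in \eqref{epsilon}: in particular, showing that over the dyadic block $[a_l,b_l]$ one may replace $\varphi''(2^{-l}r)$, $\varphi'''(2^{-l}r)$ by their suprema at the cost of a factor controlled by $k_3$ via $|\varphi'''|\leq k_3|\varphi''|/r$ (so $\log$ of the ratio of $|\varphi''|$ at the two endpoints is $\leq k_3\log 4$, etc.), and the analogous comparison $|\varphi'(2^{-l}r)|\asymp 2^{-l}r\,|\varphi''(2^{-l}r)|/(\text{ratio in }[k_1,k_2])$. Once those comparisons are in place, the contradiction is a finite chain of inequalities. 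A secondary subtlety is that $\cos\theta$ can vanish, but this only helps: it removes the $a\cos\theta$ term from $g_r'$, and the $a$-term was already shown negligible, so the argument is uniform in $\theta\in[0,\pi]$.
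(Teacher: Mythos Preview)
Your contradiction setup and the treatment of the angular derivatives (using $|g_\theta'|^2+|g_{\theta\theta}''|^2=a^2r^2$ to bound $a=2^{-l}|\xi'|$) are exactly what the paper does. The divergence is in how you handle the radial part. The paper does \emph{not} split into sign cases: instead it forms the exact linear combination $rg_r'+g_{\theta\theta}''=-\beta\,2^{\beta l}r^{-\beta}+\xi_{n+1}2^{-l}r\varphi'(2^{-l}r)$, which kills the $a\cos\theta$ term algebraically, and pairs this with $r^2g_{rr}''=\beta(\beta+1)2^{\beta l}r^{-\beta}+\xi_{n+1}2^{-2l}r^2\varphi''(2^{-l}r)$. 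These two inequalities are then treated as a $2\times2$ linear system in the unknowns $2^{\beta l}r^{-\beta}$ and $\xi_{n+1}2^{-2l}\varphi''(2^{-l}r)$ (after using $s\varphi''/\varphi'\in[k_1,k_2]$ to convert $\varphi'$ to $\varphi''$); solving gives the bounds on each term directly, and the four constants in \eqref{epsilon} are exactly the outputs of this elimination. Your sign-case argument is perfectly valid and arguably more transparent, but it produces different constants: you absorb $a\cos\theta$ via the $2\sqrt2\epsilon\lambda$ bound rather than cancelling it, and your passage from $|\varphi''(2^{-l}r)|$ to $\sup_{[a_l,b_l]}|\varphi''|$ costs a factor of $4^{k_3}$, so you will prove the lemma for a somewhat smaller $\epsilon$ than the one stated --- which is harmless for the application but means your claim that the constants in \eqref{epsilon} ``are precisely calibrated'' to your argument is not quite right. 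One point in your favor: your explicit mention that the sup/value comparison for $\varphi''$ and $\varphi'''$ over the dyadic block is needed (via $|(\ln|\varphi''|)'|\leq k_3/r$) is a step the paper passes over silently when it declares the contradiction, so your bookkeeping here is actually more careful.
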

\begin{proof}
			For the purpose of proving the result, we first evaluate various partial derivatives of $g$. 
			\begin{align}
				g_r^\prime (r,\theta)&=-\beta 2^{\beta l}r^{-\beta-1}+2^{-l}|\xi^\prime |\cos(\theta)+2^{-l}\xi_{n+1}\varphi'(2^{-l}r),\label{r}\\
				g_\theta^\prime (r,\theta)&=-2^{-l}|\xi'|r\sin(\theta),\label{theta}\\
				g_{rr}^{\prime \prime}(r,\theta)&=\beta(\beta+1)2^{\beta l}r^{-\beta-2}+2^{-2l}\xi_{n+1}\varphi^{\prime \prime }(2^{-l}r),\label{rr}\\
				g_{\theta \theta}^{\prime \prime}(r,\theta)&=-2^{-l}|\xi^{\prime}|r\cos(\theta),\label{thetatheta}
			\end{align}
			where $g_r^\prime,\, g_\theta^\prime,\,g_{rr}^{\prime \prime}\text{ and }g_{\theta \theta}^{\prime \prime}$ denote $\partial g/\partial r,\, \partial g/\partial \theta,\, \partial^2g/\partial r^2\text{ and }\partial^2g/\partial\theta^2 $ respectively. Suppose on the contrary, there exist $(r,\theta)\in [1/2,2]\times [0,\pi]$ such that 
			\begin{equation}\label{assumption}\max\left\{|g_r^\prime(r,\theta)|,|g_\theta^\prime(r,\theta)|,|g_{rr}^{\prime \prime}(r,\theta)|,|g_{\theta \theta}^{\prime \prime }(r,\theta)|\right\}< \epsilon \lambda.\end{equation}
			Then,
			$$2(\epsilon\lambda)^2>|g_\theta^\prime (r,\theta)|^2+|g_{\theta \theta}^{\prime \prime }(r,\theta)|^2=2^{-2l}\left|\xi^\prime \right|^2r^2\geq 2^{-2l }|\xi^\prime |^22^{-2}.$$
			This implies
			\begin{equation}\label{a}2^{-l}|\xi^\prime |<2\sqrt2 \epsilon \lambda <\lambda.\end{equation}
			
            We recall that both $\varphi^\prime $ and $\varphi^{\prime \prime}$ have the same sign everywhere. Therefore, in this proof, we assume that $\varphi^\prime>0$ and $\varphi^{\prime \prime}>0$. The other case, where $\varphi^\prime<0$ and $\varphi^{\prime \prime}<0$ is similar, with a possibility of revered inequalities at certain places. However, they are managed and the final result remains the same. 
            
            Now,
			$$rg_r^\prime (r,\theta)+g_{\theta \theta}^{\prime \prime }(r,\theta)=-\beta 2^{\beta l}r^{-\beta}+\xi_{n+1}2^{-l}r\varphi'(2^{-l}r).$$
			From Equation \eqref{assumption} we get, by an application of triangle inequality,
			\begin{equation}\label{1}\left|-\beta 2^{\beta l}r^{-\beta}+\xi_{n+1}2^{-l}r\varphi^\prime (2^{-l}r)\right|<3\epsilon \lambda.\end{equation}
			Similarly applying Equation \eqref{assumption} to $r^2g_{rr}^{\prime \prime}$ gives
			\begin{equation}\label{2}
				\left|\beta(\beta+1)2^{\beta l}r^{-\beta}+\xi_{n+1}\varphi^{\prime \prime}(2^{-l}r)2^{-2l}r^2\right|=|r^2g_{rr}^{\prime \prime}(r,\theta)|<4\epsilon \lambda.
			\end{equation}
            Now Equations \eqref{1} and \eqref{2} give the following estimates:
            \begin{align}
                \frac{\left(-3\epsilon \lambda+\beta2^{\beta l}r^{-\beta}\right)\varphi^{\prime \prime}(2^{-l}r)}{2^lr \varphi^\prime (2^{-l}r)}&<\frac{\xi_{n+1}\varphi^{\prime \prime}(2^{-l}r)}{2^{2l}}<\frac{\left(3\epsilon \lambda+\beta2^{\beta l}r^{-\beta}\right)\varphi^{\prime \prime}(2^{-l}r)}{2^lr \varphi^\prime (2^{-l}r)}\label{3},
        \intertext{and}
                    \frac{-4\epsilon \lambda -\beta(\beta+1)2^{\beta l}r^{-\beta}}{r^2}&<\frac{\xi_{n+1}\varphi^{\prime \prime}(2^{-l}r)}{2^{2l}}<\frac{4\epsilon \lambda-\beta (\beta+1)2^{\beta l}r^{-\beta }}{r^2}\label{4}.
                \end{align}
                Multiplying Inequality \eqref{3} with $(\beta+1)$ and Inequality \eqref{4} with $r\frac{\varphi^{\prime \prime} (w^{-l}r)}{2^l\varphi^\prime (2^{-l}r)}$ and adding these inequalities gives 
                $$
                \left|\xi_{n+1}\right|\left|2^{-2l}\varphi^{\prime \prime}(2^{-l}r)\right|\left|(\beta+1)+r2^{-l}\frac{\varphi^{\prime \prime}(2^{-l}r)}{\varphi^\prime (2^{-l}r)}\right|<\epsilon \lambda\frac{\varphi^{\prime \prime}(2^{-l}r)}{2^l r\varphi^\prime(2^{-l}r)}(3(\beta +1)+4).\notag\\
                $$
                Since $(\beta+1)+r2^{-l}\frac{\varphi^{\prime \prime}(2^{-l}r)}{\varphi^\prime (2^{-l}r)}>1$ we get
                 \begin{align}|\xi^{n+1}| |2^{-2l}\varphi^{\prime \prime}(2^{-l}r)|<\frac{\epsilon \lambda}{r^2} \frac{2^{-l}r\varphi^{\prime \prime}(2^{-l}r)}{\varphi^\prime (2^{-l}r)}(3\beta+7)
                \leq4\epsilon \lambda k_2(3\beta+7)<\lambda.\label{b}\end{align}
                We now estimate $2^{\beta l}$. To do so, we multiply Inequality \eqref{2} with $\varphi^\prime(2^{-l}r)$ and Inequality \eqref{1} with $r\varphi^{\prime \prime}(2^{-l}r)2^{-l}$ and subtract the later from the first. Thus, we have, 
                $$\begin{aligned}&\beta 2^{\beta l}r^{-\beta}|r\varphi^{\prime \prime}(2^{-l}r)2^{-l}+(\beta+1)\varphi^{\prime}(2^{-l}r)|<\epsilon \lambda |4 \varphi^\prime (2^{-l}r)+3r \varphi^{\prime \prime}(2^{-l}r)2^{-l}|.\end{aligned}$$
                This gives us
                $$\beta 2^{\beta l}r^{-\beta}\left| \frac{2^{-l}r\varphi^{\prime \prime}(2^{-l}r)}{\varphi^\prime (2^{-l}r)}+\beta+1\right|<\epsilon \lambda \left|4+\frac{2^{-l}r\varphi^{\prime \prime}(2^{-l}r)3}{\varphi^\prime (2^{-l}r)}\right|.$$
                Since $\frac{2^{-l}r\varphi^{\prime \prime}(2^{-l}r)}{\varphi^\prime (2^{-l}r)}+\beta+1>1$, we get 
                \begin{align}
                2^{\beta l}<\frac{\epsilon \lambda (4+3k_2)r^\beta}{\beta} 
                <\frac{\epsilon \lambda (4+3k_2)2^\beta}{\beta}<\lambda.\label{c}
                \end{align}
                Finally from Equations \eqref{1} and \eqref{2}, we have 
                $$\begin{aligned}&2^{-3l}|\varphi^{\prime \prime \prime}(2^{-l}r)||\xi_{n+1}|\left|\frac{\varphi^{\prime \prime}(2^{-l}r)}{2^{-l}r\varphi^{\prime \prime \prime}(2^{-l}r)}\right| \left|(\beta+1)+\frac{2^{-l}r\varphi^{\prime \prime}(2^{-l}r)}{\varphi^{\prime}(2^{-l}r)}\right|\\
                &<\frac{\epsilon \lambda \varphi^{\prime \prime}(2^{-l}r)}{r^2 \varphi^\prime (2^{-l}r)2^l}(3\beta +7).\end{aligned}$$
                Since $(\beta+1)+\frac{2^{-l}r\varphi^{\prime \prime}(2^{-l}r)}{\varphi^{\prime}(2^{-l}r)}>1$, using the definition of $k_2$ and $k_3$, we get
                \begin{align}2^{-3l}|\varphi^{\prime \prime \prime}(2^{-l}r)||\xi_{n+1}|&<8\epsilon \lambda k_2 k_3(3\beta +7) <\lambda.\label{d}\end{align}
                The inequalities \eqref{a}, \eqref{b}, \eqref{c} and \eqref{d} implies that $\lambda$ is greater than $2^{-l}|\xi^\prime|,\,|\xi_{n+1}||2^{-2l}\varphi^{\prime \prime }(2^{-l}r)|,\, 2^{\beta l}$ and $|\xi_{n+1}|2^{-3l}|\varphi^{\prime \prime \prime}(2^{-l}r)|$, which is a contradiction. Therefore, the result must be true. 
		\end{proof}
        At each point in $[ 1/2, 2] \times [0, \pi]$, Lemma \ref{lambda} gives a lower bound for one of the derivatives of $g$ mentioned in Equations \eqref{r}--\eqref{thetatheta}. In the next lemma, we show that each point in $[1/2,2]\times [0,\pi]$ has a neighborhood on which at least one of the derivatives of $g$ considered in Lemma \ref{lambda} are bounded below. These neighborhoods is lead to a partition of $[1/2,2]\times [0,\pi]$.
    \begin{lemma}\label{lemma2}
			Let $\epsilon>0$ be as in Lemma \ref{lambda} and $$\epsilon_1=\min\left\{\frac{\epsilon}{6\beta (\beta+1)2^{\beta+2}},\frac{\epsilon}{4\beta(\beta+1)(\beta+2)2^{\beta+3}},\frac{\epsilon}{8}\right\} \text{ and }\epsilon_2=\frac{\epsilon}{8}.$$
             Suppose $|g_r^\prime (r_0,\theta_0)|\geq \epsilon\lambda$ for some $(r_0,\theta_0)\in[1/2,2]\times [0,\pi].$
                Then $|g_r^\prime (r,\theta)|\geq 
                \frac{\epsilon \lambda}{2},$ for all $(r,\theta)$ satisfying
			$|r-r_0|<\epsilon_1$ and $|\theta-\theta_0|<\epsilon_2.$
			The same is true for $g_\theta^\prime, g_{rr}^{\prime \prime}$ and $g_{\theta \theta}^{\prime \prime}.$
		\end{lemma}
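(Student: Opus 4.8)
The plan is a short continuity argument. I will show that each of the four functions $g_r',\,g_\theta',\,g_{rr}''$ and $g_{\theta\theta}''$ oscillates by strictly less than $\epsilon\lambda/2$ over the rectangle $R_0:=\{(r,\theta):|r-r_0|<\epsilon_1,\ |\theta-\theta_0|<\epsilon_2\}$ (intersected with $[1/2,2]\times[0,\pi]$, which is all that is needed for the subsequent partition), and then conclude by the reverse triangle inequality: if, say, $|g_r'(r_0,\theta_0)|\ge\epsilon\lambda$, then for every $(r,\theta)\in R_0$,
$$|g_r'(r,\theta)|\ \ge\ |g_r'(r_0,\theta_0)|-|g_r'(r,\theta)-g_r'(r_0,\theta_0)|\ >\ \epsilon\lambda-\tfrac{\epsilon\lambda}{2}\ =\ \tfrac{\epsilon\lambda}{2},$$
and the same for $g_\theta',\,g_{rr}'',\,g_{\theta\theta}''$.

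First I would record the partial derivatives that govern the oscillation. Besides $g_{rr}''$ and $g_{\theta\theta}''$ from \eqref{rr}--\eqref{thetatheta}, differentiating once more gives $g_{r\theta}''=-2^{-l}|\xi'|\sin\theta$, $g_{rrr}'''=-\beta(\beta+1)(\beta+2)2^{\beta l}r^{-\beta-3}+2^{-3l}\xi_{n+1}\varphi'''(2^{-l}r)$, $\partial_r g_\theta'=-2^{-l}|\xi'|\sin\theta$, $\partial_\theta g_\theta'=-2^{-l}|\xi'|r\cos\theta$, $\partial_r g_{\theta\theta}''=-2^{-l}|\xi'|\cos\theta$ and $\partial_\theta g_{\theta\theta}''=2^{-l}|\xi'|r\sin\theta$, while $\partial_\theta g_{rr}''=0$. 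The key point is that on $[1/2,2]\times[0,\pi]$ one has $2^{-l}r\in[a_l,b_l]$ and $r^{-\gamma}\le 2^{\gamma}$ for $\gamma>0$, and that, by the very definition of $\lambda$, the quantities $2^{-l}|\xi'|$, $2^{\beta l}$, $|\xi_{n+1}|\,2^{-2l}\sup_{[a_l,b_l]}|\varphi''|$ and $|\xi_{n+1}|\,2^{-3l}\sup_{[a_l,b_l]}|\varphi'''|$ are all $\le\lambda$. Consequently each of the above derivatives is bounded on $[1/2,2]\times[0,\pi]$ by a fixed constant, depending only on $\beta$, times $\lambda$; explicitly $|g_{r\theta}''|,|\partial_r g_\theta'|,|\partial_r g_{\theta\theta}''|\le\lambda$, $|\partial_\theta g_\theta'|,|\partial_\theta g_{\theta\theta}''|\le 2\lambda$, $|g_{rr}''|\le\big(\beta(\beta+1)2^{\beta+2}+1\big)\lambda$, and $|g_{rrr}'''|\le\big(\beta(\beta+1)(\beta+2)2^{\beta+3}+1\big)\lambda$.

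Now apply the mean value theorem (Theorem~\ref{mean}) along the two-segment path $(r_0,\theta_0)\to(r_0,\theta)\to(r,\theta)$. For $g_r'$ this yields
$$|g_r'(r,\theta)-g_r'(r_0,\theta_0)|\ \le\ \sup|g_{r\theta}''|\,|\theta-\theta_0|+\sup|g_{rr}''|\,|r-r_0|,$$
which is $<\epsilon\lambda/2$ because the first term is controlled by $\epsilon_2=\epsilon/8$ and the second by the first entry in the minimum defining $\epsilon_1$. For $g_{rr}''$, which is independent of $\theta$, only the $g_{rrr}'''$ term contributes, and the second entry in the minimum defining $\epsilon_1$ is exactly the size needed. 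For $g_\theta'$ and $g_{\theta\theta}''$, the bounds $\le\lambda$ on the $r$-partial and $\le 2\lambda$ on the $\theta$-partial, together with $\epsilon_1,\epsilon_2\le\epsilon/8$, give oscillation $\le\lambda\epsilon_1+2\lambda\epsilon_2\le 3\epsilon\lambda/8<\epsilon\lambda/2$. There is no genuine obstacle here: the whole matter reduces to the elementary inequalities encoded in the definitions of $\epsilon_1$ and $\epsilon_2$, and the one structural feature that keeps the bookkeeping painless is that differentiating $g_r'$ (resp.\ $g_{rr}''$) in $r$ reproduces exactly the term $|\xi_{n+1}|2^{-2l}\varphi''(2^{-l}r)$ (resp.\ $|\xi_{n+1}|2^{-3l}\varphi'''(2^{-l}r)$) already built into $\lambda$, so the curvature hypotheses \eqref{k_3} on $\varphi$ need not be invoked anew at this step.
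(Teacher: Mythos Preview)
Your proof is correct and follows essentially the same approach as the paper: a reverse triangle inequality combined with mean-value bounds on the second/third order partials, using that each relevant partial is bounded by a constant (depending only on $\beta$) times $\lambda$ on $[1/2,2]\times[0,\pi]$. The only cosmetic difference is that you estimate the increment along a two-segment axis-parallel path, whereas the paper invokes the vector-valued mean value theorem (Theorem~\ref{mean}) along the straight segment; the resulting numerics are identical, and your write-up in fact spells out the cases $g_\theta'$ and $g_{\theta\theta}''$ that the paper leaves to the reader.
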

		\begin{proof}
			Suppose $|g_r^\prime (r_0,\theta_0)|\geq \epsilon\lambda.$ Using the triangle inequality and Theorem \ref{mean}, we have
			$$\begin{aligned}
				|g_r^\prime (r,\theta)|&\geq |g_r^\prime(r_0,\theta_0)|-|g_r^\prime (r,\theta)-g_r^\prime(r_0,\theta_0)|\\
				&\geq\epsilon\lambda -\left|g_{rr}^{\prime \prime}(r^\prime,\theta^\prime)(r-r_0)+g_{r\theta}^{\prime \prime}(r^\prime,\theta^\prime)(\theta-\theta_0)\right|,
			\end{aligned}$$
			for some $(r^\prime ,\theta^\prime)$ on the line segment joining $(r,\theta)$ and $(r_0,\theta_0)$. Using the derivatives of $g$, given in Equations \eqref{r}--\eqref{thetatheta}, we get
			$$\begin{aligned}
				|g^\prime_r(r,\theta)|&\geq \epsilon\lambda-\beta(\beta+1)2^{\beta l}(r^\prime)^{-\beta-2}|r-r_0|-|\xi_{n+1}||\varphi^{\prime \prime}(2^{-l}r)|2^{-2l}|r -r_0|\\
				&-2^{-l}|\xi^\prime||\sin(\theta^\prime)|\theta-\theta_0|\\
				&\geq \epsilon\lambda-\beta(\beta+1)\lambda 2^{\beta+2}\epsilon_1-\lambda\epsilon_1-\lambda\epsilon_2\geq \frac{\epsilon\lambda}{2}.
			\end{aligned}$$
            Suppose $g_{rr}^{\prime \prime}(r_0,\theta_0)\geq \epsilon\lambda$. Again using the triangle inequality and Theorem \ref{mean}, we get
            \begin{equation}\begin{aligned}
                \left|g_{rr}^{\prime \prime}(r,\theta)\right|\geq \epsilon\lambda-\left|g_{rrr}^{\prime \prime \prime}(r^\prime,\theta^\prime)(r-r_0)+g_{rr\theta}^{\prime\prime\prime}(r^{\prime},\theta^{\prime})(\theta-\theta_0)\right|,
            \end{aligned}\end{equation}
            for some $(r^\prime ,\theta^\prime)$ on the line segment joining $(r,\theta)$ and $(r_0,\theta_0)$. Note that 
            \begin{equation}\label{rrr}\begin{aligned}
                g_{rrr}^{\prime \prime \prime}(r,\theta)&=-\beta(\beta+1)(\beta+2)2^{\beta l}r^{-r-3}+2^{-3l}\xi_{n+1}\varphi^{\prime \prime \prime}(2^{-l}r),
            \end{aligned}\end{equation}
            and $g_{rr\theta}^{\prime \prime \prime}=0$. Therefore, we get 
            $$\begin{aligned}
                \left|g_{rr}^{\prime\prime} (r,\theta)\right|&\geq \epsilon\lambda - \beta (\beta+1) (\beta+2) 2^{\beta+3}\lambda \epsilon_1- \lambda \epsilon_2\geq \frac{\epsilon \lambda}{2}
            \end{aligned}$$
		The result can be proved for $g_\theta^\prime$ and $g_{\theta\theta}^{\prime \prime}$ in a similar fashion.
		\end{proof}
        \section{Proof of Main Results}
        We are now in a position to prove our main results stated in Section \ref{sec1}. We start with proving $L^p$-$L^p$ boundedness.
        \begin{proof}[Proof of Theorem \ref{main}]
        \mbox{}\\
    \begin{enumerate} 
    \item To prove $\mathcal{R}$ is bounded on $L^2(\R{{n+1}})$, by Plancherel theorem, it is enough to show that the multiplier of $\mathcal{R}$, given by 
		$$m(\xi)=\int_{\R{n}}e^{-2\pi i\left(\xi^\prime\cdot t+\xi_{n+1}\varphi(t)+|t|^{-\beta}\right)}\frac{\Omega(t)}{|t|^{\alpha+n}}\,\mathrm{d}t,$$
		where $\xi=(\xi_1,\ldots ,\xi_n,\xi_{n+1})$ and $\xi^\prime=(\xi_1,\ldots,\xi_n),$ is uniformly bounded. Using polar decomposition on $\R{n}$ and $\mathbb{S}^{n-1}$, the multiplier $m$ can be written as
		$$m(\xi)=\int_{\mathbb{S}^{n-2}}\int_0^\pi \int_0^\infty e^{-2\pi ih(r,\theta)}\Omega(\theta,\sigma)r^{-\alpha-1}\sin^{n-2}(\theta)\,\mathrm{d}r \mathrm{d}\theta \mathrm{d}\sigma,$$
		where $h(r,\theta)=|\xi^\prime|r\cos (\theta)+\xi_{n+1}\varphi(r)+r^{-\beta}$. Let $\eta\in C_c^\infty (0,\infty)$ be such that $\text{supp}(\eta)\subseteq [1/2,2]$ and $$\sum_{l\in\mathbb{Z}}\eta(2^l r)=1,\quad\text{for all }r\in(0,\infty).$$
        For instance, one may choose 
        $$\eta(x)=\zeta(\log_2(x)),$$
        where 
        $$\zeta(x)=\left\{
        \begin{aligned}
            &\frac{e^{\frac{1}{|x|-1}}}{e^{\frac{1}{|x|-1}}+e^{-\frac{1}{|x|}}},&\quad\text{if } |x|<1,\\
            &0,&\quad \text{otherwise}.
        \end{aligned}\right.$$
		We define, for $l \in \mathbb{Z}$,
	$$m_l(\xi):=\int_{\mathbb{S}^{n-2}}\int_0^\pi \int_0^\infty e^{-2\pi ih(r,\theta)}\Omega(\theta,\sigma)\eta(2^l r)r^{-\alpha-1}\sin^{n-2}(\theta)\,\mathrm{d}r \mathrm{d}\theta \mathrm{d}\sigma.$$
    Then $m(\xi)=\sum_{l\in\mathbb{Z}} m_l(\xi).$
    Replacing $r$ by $2^{-l}r$, in the definition of $m_l(\xi)$, we get
		\begin{equation}\label{ml}
			m_l(\xi)=2^{\alpha l}\int_{\mathbb{S}^{n-2}}\int_0^\pi \int_{1/2}^2 e^{-2\pi ig(r,\theta)}
			\Omega(\theta,\sigma)\sin^{n-2}(\theta)\eta(r)r^{-\alpha-1}\,\mathrm{d}r \mathrm{d}\theta \mathrm{d}\sigma,
		\end{equation}
		where $g(r,\theta)=2^{-l}|\xi^\prime|r\cos (\theta)+\xi_{n+1}\varphi(2^{-l} r)+2^{\beta l}r^{-\beta}$. Since $m(\xi)=\sum_{l\in\mathbb{Z}} m_l(\xi)$, it is enough to find a bound for $m_l$, which is summable. We also notice that, in Equation \eqref{ml}, we have a uniform bound in the $\sigma$ variable, namely $\left|\mathbb{S}^{n-2}\right|\left\|\Omega\right\|_\infty$. Therefore, we focus on the double integral in $r$ and $\theta$.
        
       Let $\kappa$ be a compactly supported smooth function with supp$(\kappa)\subseteq [-1,1]$ and
       $$\sum_{z\in\mathbb{Z}} \kappa\left(x+\frac{4}{3}z\right)=1,\quad x\in(-\infty, \infty).$$
       For an example, we may choose
       $$\kappa(x)=\frac{\psi(x)}{\sum_{j\in\mathbb{Z}}\psi\left(x+\frac{4}{3}j\right)},\quad \text{where}$$
       $$\psi(x)=\begin{cases}
           e^{-\frac{1}{x\left(\frac{4}{3}-x\right)} },&\text{if } x\in (0,\frac{4}{3})\\
           0,&\text{otherwise }.\end{cases}$$
        Let $J$ be an index set given by $J=\{j=(j_1,j_2): j_1\in \epsilon_1\mathbb{Z} \text{ and }j_2\in \epsilon_2\mathbb{Z}\}$. For $j\in J$, define 
        $$\chi_j(r,\theta)=\kappa\left(\frac{4(r-j_1)}{3\epsilon_1}\right)\kappa\left(\frac{4(\theta-j_2)}{3\epsilon_2}\right).$$
        Then $\text{supp}\,\chi_j\subseteq [\frac{-3\epsilon_1}{4}+\epsilon_1j_1,\frac{3\epsilon_1}{4}+\epsilon_1j_1]\times [\frac{-3\epsilon_2}{4}+\epsilon_2j_2,\frac{3\epsilon_2}{4}+\epsilon_2j_2]$ and in particular $\{\chi_j\}_{j\in J}$ forms a partition of unity in $[1/2,2]\times [0,\pi]$. We remark that the number of $j$s is independent of $l$. We now define,
		$$m_{l,j}(\xi):=2^{\alpha l}\int_{\mathbb{S}^{n-2}}\int_0^\pi \int_0^\infty e^{-2\pi ig(r,\theta)}\Omega(\theta,\sigma)r^{-\alpha-1}\eta(r)\chi_j(r,\theta)\sin^{n-2}(\theta)\,\mathrm{d}r \mathrm{d}\theta \mathrm{d}\sigma.$$For a fixed $j$, using Lemma \ref{lambda} and Lemma \ref{lemma2}, it is clear that for all $(r,\theta)\in\text{supp}\,(\chi_j), $ one of $|g_r^\prime (r,\theta)|, |g_{rr}^{\prime \prime}(r,\theta)|, |g_\theta^\prime(r,\theta)|$ and $|g_{\theta \theta}^{\prime \prime}(r,\theta)|$ is at least as much as $\epsilon \lambda/2$. We now consider each of the cases.
        
     \underline{\textbf{Case 1:}} When $j$ is such that $|g_r^\prime (r,\theta)|\geq \frac{\epsilon \lambda}{2}$ for all $(r,\theta)\in\text{supp}\, \chi_j,$ we use
	integration by parts in the variable $r$ and write 
		$$\begin{aligned}m_{l,j}(\xi)=\frac{2^{\alpha l}}{2\pi }\int_{\mathbb{S}^{n-2}}\int_0^\pi \int_0^\infty &e^{-2\pi ig(r,\theta)}\frac{\partial}{\partial r}\left(\frac{-1}{i g_r^\prime (r,\theta)}r^{-\alpha-1}\chi_j(r,\theta)\eta(r)\right)\times\\
		&\Omega(\theta,\sigma)\sin^{n-2}(\theta)\,\mathrm{d}r\mathrm{d}\theta \mathrm{d}\sigma.\end{aligned}$$
		Applying the estimate $|g_r^\prime (r,\theta)|\geq \epsilon\lambda/2$ to each term in the derivative, we get
		\begin{align}
			|m_{l,j}(\xi)|\leq &\frac{2^{\alpha l}}{2\pi}\int_{\mathbb{S}^{n-2}}\int_0^\pi \int_{1/2}^2\left\{\frac{r^{-\alpha-1}}{|g_r^\prime (r,\theta)|}|\chi_j(r,\theta)||\eta'(r)|\right.\notag\\
			&+\frac{r^{-\alpha-1}}{|g_r^\prime (r,\theta)|}\left|\frac{\partial}{\partial r}\chi_j(r,\theta)\right||\eta(r)|+\frac{1}{|g_r^\prime(r,\theta)|}(\alpha+1)r^{-\alpha-2}|\chi_j(r,\theta)||\eta(r)|\notag\\
			&\left.+\frac{1}{|g_r^\prime(r,\theta)|^2}|g_{rr}^{\prime \prime}(r,\theta)|r^{-\alpha-1}|\chi_j(r,\theta)||\eta(r)|\right\}|\Omega(\theta,\sigma)|\,\mathrm{d}r\mathrm{d}\theta \mathrm{d}\sigma.\notag\\
			\lesssim&\frac{2^{\alpha l}}{\lambda}.
		\end{align}
        
		\underline{\textbf{Case 2:}} When $j$ is such that $|g_\theta^\prime (r,\theta)|\geq \epsilon \lambda/2$ for all $(r,\theta)\in\text{supp}\,\chi_j,$ we use integration by parts in the variable $\theta$ to get 
		$$\begin{aligned}
			m_{l,j}(\xi)=&\frac{2^{\alpha l}}{2\pi}\int_{\mathbb{S}^{n-2}}\int_0^\pi \int_0^\infty e^{-2\pi ig(r,\theta)}r^{-\alpha-1}\eta(r)\times\\
			&\frac{\partial}{\partial \theta}\left(\frac{-1}{ig_\theta^\prime (r,\theta)}\chi_j(r,\theta)\Omega(\theta,\sigma)\sin^{n-2}(\theta)\right)\mathrm{d}r\mathrm{d}\theta \mathrm{d}\sigma.
		\end{aligned}$$
		Again applying the estimate $|g_\theta^\prime (r,\theta)|\geq \epsilon \lambda/2$ to each term in the derivative we get
		\begin{equation}|m_{l,j}(\xi)|\lesssim \frac{2^{\alpha l}}{\lambda}.\end{equation}
        
        \underline{\textbf{Case 3:}} Let $j$ be such that $|g_{rr}^{\prime \prime}(r,\theta)|\geq \epsilon\lambda/2$ for all $(r,\theta)\in \text{supp}\,\chi_j.$ Then using Equation \eqref{van der}, we get 
		\begin{equation}|m_{l.j}(\xi)|\lesssim \frac{2^{\alpha l}}{\sqrt{\lambda}}.\end{equation}
        
	    \underline{\textbf{Case 4:}} Lastly, if $j$ is such that $|g_{\theta \theta}^{\prime \prime}(r,\theta)|\geq \epsilon \lambda/2$, for all $(r,\theta)\in \text{supp}\,\chi_j,$ we again use Equation \eqref{van der} and obtain 
	    $$|m_{l.j}(\xi)|\lesssim \frac{2^{\alpha l}}{\sqrt{\lambda}}.$$
	    We also have the trivial bound \begin{equation}|m_{l,j}(\xi)|\lesssim 2^{\alpha l}, \text{ for all }j\in J.\end{equation}
	    Since $\lambda\geq 2^{\beta l}$, we have the following estimate.
	    \begin{equation}|m_{l,j}(\xi)|\lesssim\left\{\begin{aligned}
	    	&2^{(\alpha-\frac{\beta}{2})l},&\text{ if }&l\geq 0,\\
	    	&2^{\alpha l},&\text{ if }&l<0.
	    \end{aligned}\right.\end{equation}
	    Also, since there are only finitely many $j$'s with their number independent of $l$, we have
	    \begin{equation}\label{L2}|m_{l}(\xi)|\lesssim\left\{\begin{aligned}
	    	&2^{(\alpha-\frac{\beta}{2})l},&\text{ if }&l\geq 0,\\
	    	&2^{\alpha l},&\text{ if }&l<0.
	    \end{aligned}\right.\end{equation}
	    From Equation \eqref{L2} it is clear at once that when $\beta>2\alpha$, the multiplier $m$ is bounded and hence $\mathcal{R}$ is bounded on $L^2(\R{{n+1}})$.\\
    \item We now proceed to obtain the $L^p$-boundedness of the operator $\mathcal{R}$. Write
    $$\mathcal{R}f(x)=\int_{\mathbb{S}^{n-1}}\int_0^\infty f(x-\Gamma(r\omega))\frac{e^{-2\pi ir^{-\beta}}\Omega(r,\omega)}{r^{\alpha+1}}\,\mathrm{d}r d\omega.$$
    Then,
    $$\begin{aligned}\mathcal{R}_lf(x)&=\mathcal{F}^{-1}\left(m_l(\xi)\widehat{f}(\xi)\right)(x)\\
    &=\int_{\mathbb{S}^{n-1}}\int_0^\infty f(x-\Gamma(r\omega))\frac{e^{-2\pi ir^{\beta}}\Omega(r\omega))}{{r^{\alpha+1}}} \eta(2^lr)\,\mathrm{d}rd\omega.\end{aligned}$$
    This gives the estimate
    \begin{equation}\label{L1}\left\|\mathcal{R}_lf\right\|_1\lesssim 2^{\alpha l}\|f\|_1.\end{equation}
    Using the Riesz-Thorin interpolation theorem (see Theorem \ref{Riesz}) on Equations \eqref{L2} and \eqref{L1}, we get
    $$\left\|\mathcal{R}_l\right\|_{L^p\to L^p}\lesssim\left\{\begin{aligned}
        &2^{(\alpha-\beta+\beta/p)l},&\text{ if }&l\geq 0,\\
        &2^{\alpha l},&\text{ if }&l<0.
    \end{aligned}\right.$$
    Since $\mathcal{R}=\sum_{l\in\mathbb{Z}}\mathcal{R}_l$, for $2\geq p>\frac{\beta}{\beta-\alpha},\, \mathcal{R}$ is bounded on $L^p(\R{{n+1}}).$ Now we use a standard duality argument to show that $\mathcal{R}$ is bounded on $L^p$ for $2\leq p<\beta/\alpha$. The formal dual of $\mathcal{R}$ is given by
    $$\mathcal{R}^*g(x)=\int_{\R{n}}g(x+\Gamma(t))\frac{e^{2\pi i|t|^{-\beta}}\overline{\Omega(t)}}{|t|^{\alpha+n}}.$$
    Since $\Gamma$ and $-\Gamma$ satisfies the same properties, we have that $\mathcal{R}^*$ is also bounded on $L^p(\R{{n+1}})$ for $2\geq p>\frac{\beta}{\beta-\alpha}$. Now,
    $$\begin{aligned}
        \|\mathcal{R}f\|_{L^p(\R{{n+1}})}&=\sup_{g\in L^{p^\prime}(\R{{n+1}})}\left\{\frac{\left|\int_{\R{n}}\mathcal{R}f(x)g(x)\,\mathrm{d}x\right|}{\|g\|_{L^{p^\prime}(\R{{n+1}})}}\right\}\\
        &=\sup_{g\in L^{p^\prime}(\R{{n+1}})}\left\{\frac{\left|\int_{\R{n}}f(x)\mathcal{R}^*g(x)\,\mathrm{d}x\right|}{\|g\|_{L^{p^\prime}(\R{{n+1}})}}\right\}\\
        &\leq \|f\|_{L^{p}(\R{{n+1}})}\sup_{g\in L^{p^\prime}}\left(\frac{\left\|\mathcal{R}^*g\right\|_{L^{p^\prime}}}{\|g\|_{L^{p^\prime}(\R{{n+1}})}}\right)\\
        &\lesssim \|f\|_{L^p(\R{{n+1}})},
    \end{aligned}$$
    for $2\geq p^\prime>\frac{\beta}{\beta-\alpha}$. This implies $\mathcal{R}$ is bounded on $L^p(\R{{n+1}})$ for $2\leq p<\beta/\alpha$.
    \end{enumerate}
    \end{proof}
    \noindent Next we give the proof of Sobolev estimate for the operator $\mathcal{R}.$
    \begin{proof}[Proof of Theorem \ref{L2SobolevEstimate}]
    \mbox{}\\
    \begin{enumerate} 
    \item  We start by observing that
        $$
            \left\|\mathcal{R}f\right\|_{L_s^2(\R{{n+1}})}
=\left(\int_{\R{{n+1}}}\left(1+|\xi|^2\right)^s\left|m(\xi)\right|^2\left|\widehat{f}(\xi)\right|^2d\xi\right)^{1/2}.$$
        Using the bounds for $m_{l,j}$, we have 
        \begin{equation}\begin{aligned}|m(\xi)|&\lesssim \sum_{l\in\mathbb{Z}}\sum_{j\in J_r\cup J_\theta}\frac{2^{\alpha l}}{\lambda}+\sum_{l\in\mathbb{Z}}\sum_{j\in J_{rr}\cup J_{\theta\theta}}\frac{2^{\alpha l}}{\sqrt{\lambda}},\end{aligned}\end{equation}
        where, $J_r$, $J_{\theta}$, $J_{rr}$, and $J_{\theta \theta}$ are the collections of those indices for which $|g_r(r, \theta)| \geq \frac{\epsilon \lambda}{2}$, $|g_{\theta}(r, \theta)| \geq \frac{\epsilon \lambda}{2}$, $|g_{rr}(r, \theta)| \geq \frac{\epsilon \lambda}{2}$, and $|g_{\theta \theta}(r, \theta)| \geq \frac{\epsilon \lambda}{2}$, respectively.
        We split our analysis in two cases.
        
        \underline{\textbf{Case 1:}} First let us consider $\left|\xi'\right|\geq\left|\xi_{n+1}\right|.$
        In this case $|\xi|\asymp |\xi^\prime|.$ Using the definition of $\lambda$, we get
        $$|m(\xi)|\lesssim \left(\sum_{l\in\mathbb{Z}}\frac{2^{\alpha l}}{\max\left\{2^{\beta l},2^{-l}\left|\xi^\prime \right|\right\}}
        +\sum_{l\in\mathbb{Z}}\frac{2^{\alpha l}}{\left(\max\left\{2^{\beta l},2^{-l}\left|\xi^\prime\right|\right\}\right)^{1/2}}\right)$$
        Now, we break each of the above sums in two parts: one over those $l$ for which $2^{\beta l}\geq 2^{-l}|\xi^\prime|$ and the other where $2^{\beta l}\leq 2^{-l}|\xi^\prime|$. Hence we obtain
         $$\begin{aligned}
         |m(\xi)|&\lesssim \left(\sum_{\substack{l\in\mathbb{Z}\\2^{\beta l}\geq 2^{-l}\left|\xi^\prime\right|}}2^{(\alpha-\beta/2)l}+\sum_{\substack{l\in\mathbb{Z}\\2^{\beta l}\leq 2^{-l}\left|\xi^\prime\right|}}2^{(\alpha+1/2)l}\left|\xi^\prime\right|^{-1/2}\right)\\
         &\lesssim \left(\left|\xi^\prime\right|^{\frac{\alpha-\beta/2}{1+\beta}}+\left|\xi^\prime \right|^{\frac{\alpha+1/2}{\beta+1}-\frac{1}{2}}\right)\asymp \left|\xi^\prime\right|^{\frac{\alpha-\beta/2}{1+\beta}}\asymp |\xi |^{\frac{\alpha -\beta/2}{1+\beta}}.
         \end{aligned}$$
         
         \underline{\textbf{Case 2:}} Next, we assume $\left|\xi_{n+1}\right|>\left|\xi^\prime\right|.$ Here, we have $| \xi | \asymp | \xi_{n+1} |$. Again from the definition of $\lambda$, we get
         \begin{align*}
             \left| m \left( \xi \right) \right| &\lesssim \left[ \sum\limits_{l \in \mathbb{Z}} 2^{\alpha l} \left( \max \left\lbrace 2^{\beta l}, 2^{-2l} \left| \xi_{n + 1} \right| \sup\limits_{\frac{1}{2} < r < 2} \left\lbrace \left| \varphi'' \left( 2^{-l}r \right) \right| \right\rbrace \right\rbrace \right)^{-1} \right. \\
             &\left.+ \sum\limits_{l \in \mathbb{Z}} 2^{\alpha l} \left( \max \left\lbrace 2^{\beta l}, 2^{-2l} \left| \xi_{n + 1} \right| \sup\limits_{\frac{1}{2} < r < 2} \left\lbrace \left| \varphi'' \left( 2^{-l}r \right) \right| \right\rbrace \right\rbrace \right)^{- \frac{1}{2}} \right].
         \end{align*}
         From Equation \eqref{k_3}, we have,
         $$- \frac{k_3}{x} \leq \left( \ln |\varphi''| \right)' \left( x \right) \leq \frac{k_3}{x}.$$
         Integrating from $1$ to $x$ in the above inequalities, we get
         $$|\varphi'' \left( 1 \right)| x^{-k_3} \leq |\varphi'' \left( x \right)| \leq |\varphi'' \left( 1 \right)| x^{k_3}.$$
         This leads us to
         $$|\varphi'' \left( 1 \right)| 2^{k_3 \left( l - 1 \right)} \leq \sup\limits_{\frac{1}{2} \leq r \leq 2} \left\lbrace \left| \varphi'' \left( 2^{-l}r \right) \right| \right\rbrace \leq |\varphi'' \left( 1 \right)| 2^{-k_3 \left( l - 1 \right)}.$$
         We remark here that these inequalities are valid for $l \leq 0$ (i.e., $x \geq 1$ in the step where we have integrated). For the case when $l > 0$, the inequalities are reversed, but are not used in our proof.

         Let $P \left( l \right)$ be the property that $2^l \geq \left| \frac{\varphi'' \left( 1 \right)}{2} \right|^{\frac{1}{\beta + k_3 + 2}} \left| \xi_{n + 1} \right|^{\frac{1}{\beta + k_3 + 2}}$. Now, in the estimate for $m \left( \xi \right)$, we split the sum into two parts, and apply the estimates on the terms involving the maximum, to obtain
         \begin{align*}
             \left| m \left( \xi \right) \right| &\lesssim \left[ \left| \xi_{n + 1} \right|^{\frac{\alpha - \beta/2}{\beta + k_3 + 2}} + \sum\nolimits' 2^{\left( \alpha + 1 \right) l} 2^{- \frac{k_3}{2} \left( l - 1 \right)} \left| \xi_{n + 1} \right|^{- \frac{1}{2}} \right],
         \end{align*}
         where $\sum'$ denotes the sum over the collection of those integers $l$ that do not satisfy property $P \left( l \right)$. It is clear that such a collection is bounded above and unbounded below, thereby forming a geometric sum. Using this fact, we get
         $$\left| m \left( \xi \right) \right| \lesssim \left| \xi_{n + 1} \right|^{\frac{\alpha - \beta/2}{\beta + k_3 + 2}}.$$
         Combining the two estimates obtained above, we get (for large $| \xi |$),
         $$\left| m \left( \xi \right) \right| \lesssim \begin{cases}
                                                        | \xi |^{\frac{\alpha - \beta/2}{\beta + 1}}, & \text{ if }\,| \xi' | \geq \left| \xi_{n + 1} \right|, \\
                                                        | \xi |^{\frac{\alpha - \beta/2}{\beta + k_3 + 2}}, & \text{ if }\,| \xi' | \leq \left| \xi_{n + 1} \right|.
                                                    \end{cases}$$
        We now recall that even for an $L^2$-estimate, we require $\beta > 2 \alpha$, and from our assumptions, we have $k_3 > 0$. Hence, the following estimate is true for large values of $| \xi |$.
        $$\left| m \left( \xi \right) \right| \lesssim | \xi |^{\frac{\alpha - \beta/2}{\beta + k_3 + 2}}.$$
        Now, we are in a position to obtain the $L^2$-Sobolev estimate.
        \begin{align*}
            \| \mathcal{R}f \|_{L^2_s \left( \mathbb{R}^{n + 1} \right)}^2 &= \int\limits_{\mathbb{R}^{n + 1}} \left( 1 + | \xi |^2 \right)^s \left| m \left( \xi \right) \right|^2 \left| \widehat{f} \left( \xi \right) \right|^2 \mathrm{d}\xi \\
            &\lesssim \left[ \int\limits_{| \xi | \leq 1} \left| \widehat{f} \left( \xi \right) \right|^2 + \int\limits_{| \xi | > 1} | \xi |^{2 \left( s - \frac{\beta/2 - \alpha}{\beta + k_3 + 2} \right)} \left| \widehat{f} \left( \xi \right) \right|^2 \mathrm{d}\xi \right] \\
            &\lesssim \| \widehat{f} \|_{L^2 \left( \mathbb{R}^{n + 1} \right)}^2 \asymp\| f \|_{L^2 \left( \mathbb{R}^{n + 1} \right)}^2.
        \end{align*}
        In the final step, we have used the fact that $s \leq \frac{\beta/2 - \alpha}{\beta + k_3 + 2}$.\\
    \item With the $L^2$-Sobolev estimate and the $L^p$-$L^p$ boundedness at hand, we may now use the technique of complex interpolation to obtain $L^p$-Sobolev estimates.
        First, we observe that we have the $L^p$-boundedness of the operator $\mathcal{R}$ for $\frac{\alpha}{\beta} < \frac{1}{p} \leq 1 - \frac{\alpha}{\beta}$. We consider the analytic family of operators $\left\lbrace \mathcal{R}_z | 0 \leq \text{Re } z \leq s_0 \right\rbrace$, given by
        $$\mathcal{R}_zf \left( x \right) = \mathcal{F}^{-1} \left[ \left( 1 + | \cdot |^2 \right)^{\frac{z}{2}} m \ \widehat{f} \right] \left( x \right).$$
        Given an $s \in \left( 0, s_0 \right)$ and $p$ as in the statement of the theorem, let us fix $\frac{1}{p_0} = \left( 1 - \frac{s}{s_0} \right)^{-1} \left( \frac{1}{p} - \frac{s}{2s_0} \right)$. Clearly, we have the $L^{p_0}$ boundedness of the operator $\mathcal{R}$. That is,
        $$\| \mathcal{R}_0f \|_{L^{p_0} \left( \mathbb{R}^{n + 1} \right)} \leq C \| f \|_{L^{p_0} \left( \mathbb{R}^{n + 1} \right)}.$$
        Due to the $L^2$-Sobolev estimate, we also have,
        $$\| \mathcal{R}_{s_0}f \|_{L^2 \left( \mathbb{R}^{n + 1} \right)} \leq C \| f \|_{L^2 \left( \mathbb{R}^{n + 1} \right)}.$$
        The result now follows from the complex interpolation (see \cite{SteinFA}).
        \end{enumerate}
        \end{proof}
        \section{Conclusion}
        In this paper, we have established $L^p$-$L^p$ boundedness and Sobolev estimates for the oscillatory hypersingular integral operator 
        $$\mathcal{R}f(x)=\int_{\R{n}}f(x-\Gamma(t))\frac{e^{-2\pi i|t|^{-\beta}}\Omega(t)}{|t|^{\alpha+n}}\,\mathrm{d}t.$$
        Our consideration of the radial function $\varphi$, used in defining the surface $\Gamma$, ensures it satisfies several properties reminiscent of monomials. In fact, it is easy to see that, towards $0$ and $\infty$, the function $\varphi$ can be compared to monomials. These properties allow us to extend and generalize certain existing results in the literature.  

        We note that when $\beta<2\alpha$, operators of the form \eqref{R} may fail to be bounded on $L^p(\R{{n+1}})$ (see \cite{MR4078200}). However, the boundary case $\beta=2\alpha$ remains open, and requires further investigation. 

        The range of exponents $p$ for which $\mathcal{R}$ is bounded on $L^p(\R{{n+1}})$ depends on the parameters $\alpha$ and $\beta$. We believe that the range established here can be improved.

        Finally, the surface $\Gamma$ introduced in this study exhibits interesting geometric features, as highlighted in Remarks \ref{1.2} and \ref{1.3}, and may be of independent interest for further geometric or analytic explorations. 
        
        \section*{Acknowledgment}
        The first author gratefully acknowledges the financial support from the University Grants Commission of India (File Number 221610045735).
        The second author extends his gratitude to the Ministry of Education, Government of India, for the Prime Minister Research Fellowship (PMRF), grant number: 2101706.
        All the authors thankful to Professor Ashisha Kumar for his insightful suggestions  during the discussions.

\bibliographystyle{plain}

\end{document}